\newtheorem{cor}{Corollary}[section]
\newtheorem{te}[cor]{Theorem}
\newtheorem{p}[cor]{Proposition}
\newtheorem{q}[cor]{Question}
\newtheorem{lemma}[cor]{Lemma}
\theoremstyle{definition}
\newtheorem{de}[cor]{Definition}
\theoremstyle{remark}
\newtheorem{ob}[cor]{Observation}
\newtheorem{nt}[cor]{Notation}
\newcommand{\cz}{\mathbb{C}}
\newcommand{\nz}{\mathbb{N}}
\newcommand{\bb}{\mathcal{B}}
\newcommand{\nr}{\mathcal{N}}
\newcommand{\unit}{\mathcal{U}}
\begin{document}

\title{A Convex Structure on Sofic Embeddings}
\maketitle
\begin{center}
Liviu P\u aunescu\footnote{Work supported by the Sinergia grant
CRSI22-130435 of the Swiss National Science foundation.}
\end{center}

 $\mathbf{Abstract.}$ In \cite{Br} Nathanial Brown
introduced a convex-like structure on the set of unitary
equivalence classes of unital *-homomorphisms of a separable type
$II_1$ factor into $R^\omega$ (ultrapower of the hyperfinite
factor). The goal of this paper is to introduce such a structure
on the set of sofic representations of groups. We prove that if
the commutant of a representation acts ergodicaly on the Loeb
measure space then that representation is an extreme point.

\tableofcontents

\section{Introduction}

Abstract convex-like structures are defined in \cite{Br}. It is
shown that $\mathbb{H}om(N,R^\omega)$ posses such a structure,
where $N$ is a type $II_1$ factor and $R^\omega$ is the ultrapower
of the hyperfinite factor. An important result is that
$[\pi]\in\mathbb{H}om(N,R^\omega)$ is an extreme point iff the
relative commutant $\pi(N)'\cap R^\omega$ is a factor.

We are interested in a similar convex structure on the set of
sofic representations of a group, that we denote by
$Sof(G,P^\omega)$, where $P$ stands for permutations. The role of
von Neumann algebras will be replaced in this paper by ergodic
theory.

We start by briefly introducing the objects that we are working
with: sofic groups, Loeb measure space, the space of sofic
representations. In the second section we recall what an abstract
convex-like structure is and introduce such a structure on
$Sof(G,P^\omega)$. Then we prove our main result that ergodicity
of the commutant of the sofic representation acting on the Loeb
measure space implies that the representation is an extreme point.

\subsection{Ultraproducts of matrix algebras}

Let $\omega$ be a free ultrafilter on $\nz$. We shall work with
ultraproducts of matrix algebras, which is a particular case of
ultraproducts of von Neumann algebras.

Denote by $M_n(\cz)$ or simply $M_n$ the matrix algebra in
dimension $n$. Recall that for $x\in M_n$ we have the trace norm:
$||x||_2=\left(\frac1nTr(x^*x)\right)^{1/2}$ (we normalize it such
that $||Id||_2=1$ independent of the dimension).

Let $(n_k)_k\subset\nz$ be a sequence such that
$\lim_{k\to\infty}n_k=\infty$. Define:
\begin{align*}
&l^\infty(\nz,M_{n_k})=\{x=(x_k)_k\in\Pi_kM_{n_k}:\sup_k||x_k||<\infty\}\\
&\nr_\omega=\{x\in
l^\infty(\nz,M_{n_k}):\lim_{k\to\omega}||x_k||_2=0\}\mbox{ and }\\
&\Pi_{k\to\omega}M_{n_k}=l^\infty(\nz,M_{n_k})/\nr_\omega.
\end{align*}
The ultraproduct $\Pi_{k\to\omega}M_{n_k}$ is a von Neumann
algebra, though the proof is a little involved. If $x_k\in
M_{n_k}$ we shall denote by $\Pi_{k\to\omega}x_k$ the
corresponding element in the ultraproduct. Note that this algebra
has a faithful trace, namely $Tr(x)=\lim_{k\to\omega}Tr(x_k)$,
where $x=\Pi_{k\to\omega}x_k$.

\subsection{The Loeb measure space}

The Loeb space was introduced in \cite{Lo} (our exposition is from
\cite{El-Sz}). We shall denote by $P_n$ the subgroup of
permutation matrices and by $D_n$ the subalgebra of diagonal
matrices. We shall interpret $\Pi_{k\to\omega}P_{n_k}$ and
$\Pi_{k\to\omega}D_{n_k}$ as subsets in $\Pi_{k\to\omega}M_{n_k}$.
By a theorem of Sorin Popa (see \cite{Po1}, Proposition 4.3)
$\Pi_{k\to\omega}D_{n_k}$ is a maximal abelian nonseparable
subalgebra of $\Pi_{k\to\omega}M_{n_k}(\cz)$. It is isomorphic to
$L^\infty(X)$ where $X$ is a Loeb measure space. The construction
of the Loeb space is valid for any sequence of probability spaces,
but we shall need it just for finite spaces.

Let $(X_{n_k},\mu_{n_k})$ be a space with $n_k$ points equipped
with the normalized counting measure such that $(D_{n_k},Tr)\simeq
L^\infty(X_{n_k},\mu_{n_k})$. Let $p\in\Pi_{k\to\omega}D_{n_k}$ be
a projection. It is not difficult to see that
$p=\Pi_{k\to\omega}p_k$, where $p_k$ is a projection in $D_{n_k}$
and $Tr(p)=\lim_{k\to\omega}Tr(p_k)$ by definition. A projection
in the algebra is the same information as a measurable subset in
the underlaying space. This discussion offers a picture of how the
Loeb space should be constructed.

Let $(X_{n_k})_\omega$ be the algebraic ultraproduct, i.e.
$(X_{n_k})_\omega=\Pi_kX_{n_k}/\sim_\omega$, where $\Pi_kX_{n_k}$
is the Cartesian product and $(x_k)\sim_\omega(y_k)$ iff
$\{k:x_k=y_k\}\in\omega$. If $x_k\in X_{n_k}$ we shall denote by
$(x_k)_\omega$ the corresponding element in $(X_{n_k})_\omega$. If
$A_k\subset X_{n_k}$ then define
$(A_k)_\omega=\{(x_k)_\omega:\{k:x_k\in
A_k\}\in\omega\}\subset(X_{n_k})_\omega$. Let
$\bb_\omega^0=\{(A_k)_\omega:A_k\subset X_{n_k}\}$. Then
$\bb_\omega^0$ is a Boolean algebra of subsets of
$(X_{n_k})_\omega$.

For $(A_k)_\omega\in\bb_\omega^0$ define
$\mu_\omega((A_k)_\omega)=\lim_{k\to\omega}\mu_{n_k}(A_k)$. Let
$\bb_\omega$ be the completion of $\bb_\omega^0$ w.r.t the measure
$\mu_\omega$. Then $((X_{n_k})_\omega,\bb_\omega,\mu_\omega)$ is a
nonseparable probability space and $L^\infty((X_{n_k})_\omega,
\mu_\omega)\simeq(\Pi_{k\to\omega}D_{n_k},Tr)$.

\subsection{Sofic groups}

Introduced by Gromov in \cite{Gr}, sofic groups have received
considerable attention in the last years. For an introduction to
the subject see the nice survey articles of Vladimir Pestov,
\cite{Pe},\cite{Pe-Kw}.

\begin{de}
A group $G$ is called \emph{sofic} if there exists a sequence
$\{n_k\}_k\subset\nz$, $\lim_k n_k=\infty$ and an injective group
morphism $\Theta:G\to\Pi_{k\to\omega}P_{n_k}$
\end{de}

The sequence $(n_k)_k$ doesn't have a special role. If such a
morphism exists for some $(n_k)_k$ it will exists for any other
$(m_k)_k$ as long as $\lim_k m_k=\infty$. The following theorem is
due to Gabor Elek and Endre Szabo, \cite{El-Sza}.

\begin{te}
A group $G$ is sofic iff there exists a group morphism
$\Theta:G\to\Pi_{k\to\omega}P_{n_k}$ such that $Tr(\Theta(g))=0$
for any $g\neq e$.
\end{te}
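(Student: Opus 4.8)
The plan is to dispose of one implication trivially and to obtain the other by extracting almost-morphisms from a given sofic embedding, amplifying them by tensor powers to kill fixed points, and then assembling a genuine morphism by a diagonal argument. If $\Theta\colon G\to\Pi_{k\to\omega}P_{n_k}$ is a morphism with $Tr(\Theta(g))=0$ for every $g\neq e$, then $\Theta(g)\neq Id$ for such $g$, since $Tr(Id)=1$ and the trace on $\Pi_{k\to\omega}M_{n_k}$ is faithful; hence $\Theta$ is injective and $G$ is sofic. The content lies in the converse.

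For the converse, fix an injective morphism $\Theta_0\colon G\to\Pi_{k\to\omega}P_{n_k}$ (and assume for simplicity that $G$ is countable). I would reduce to the claim that for every finite symmetric $F\ni e$ and every $\varepsilon>0$ there exist $N$ and $\psi\colon F\to P_N$ with $\psi(e)=Id$, with \emph{multiplicative defect} at most $\varepsilon$ on $F$ — that is, $\psi(g)\psi(h)$ and $\psi(gh)$ agree off an $\varepsilon$-fraction of coordinates whenever $g,h,gh\in F$ — and with $Tr(\psi(g))<\varepsilon$ for all $g\in F\setminus\{e\}$. Granting this, choose an increasing exhaustion $F_1\subset F_2\subset\cdots$ of $G$ by such sets, apply the claim with $(F,\varepsilon)=(F_j,1/j)$ to get $\psi_j\colon F_j\to P_{N_j}$, pad each $\psi_j$ by a direct sum of enough disjoint copies of itself so that $N_j\to\infty$ (changing neither defect nor traces), and set $\Theta(g):=\Pi_{j\to\omega}\psi_j(g)$ (using $\psi_j(g)$ once $g\in F_j$). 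For fixed $g,h$ one has $g,h,gh\in F_j$ for all large $j$, so $||\psi_j(g)\psi_j(h)-\psi_j(gh)||_2^2\le 2/j\to0$ and $\Theta$ is multiplicative, while $Tr(\Theta(g))=\lim_{j\to\omega}Tr(\psi_j(g))=0$ for $g\neq e$.

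The remaining — and genuinely delicate — point is the claim, the subtlety being that soficity only yields $Tr(\Theta_0(g))<1$, not $Tr(\Theta_0(g))$ small. Write $\Theta_0(g)=\Pi_{k\to\omega}\sigma^g_k$ with $\sigma^g_k\in P_{n_k}$ and $\sigma^e_k=Id$; faithfulness of the trace gives $\delta:=\min_{g\in F\setminus\{e\}}\bigl(1-Tr(\Theta_0(g))\bigr)>0$, depending on $F$ and $\Theta_0$ but \emph{not} on $\varepsilon$. For any $\eta>0$ the set of $k$ with $Tr(\sigma^g_k)\le 1-\delta/2$ (all $g\in F\setminus\{e\}$) and with $\sigma^g_k\sigma^h_k$ agreeing with $\sigma^{gh}_k$ off an $\eta$-fraction of coordinates (all $g,h,gh\in F$) lies in $\omega$, hence is nonempty; a choice of such $k$ gives $\varphi\colon F\to P_{n_k}$ with defect $\le\eta$ and $Tr(\varphi(g))\le 1-\delta/2$ for $g\neq e$. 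Passing to the tensor power $\varphi^{\otimes m}\colon F\to P_{n_k^m}$, $g\mapsto\varphi(g)^{\otimes m}$: the normalized trace is multiplicative over tensor products, so $Tr(\varphi^{\otimes m}(g))=Tr(\varphi(g))^m\le(1-\delta/2)^m$; and since $\varphi^{\otimes m}(g)\varphi^{\otimes m}(h)=(\varphi(g)\varphi(h))^{\otimes m}$, a tuple is sent wrongly relative to $\varphi^{\otimes m}(gh)$ only when one of its $m$ coordinates lies in the disagreement set (of size $\le\eta n_k$) of $\varphi(g)\varphi(h)$ and $\varphi(gh)$, so the defect of $\varphi^{\otimes m}$ is at most $1-(1-\eta)^m\le m\eta$. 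Hence, given $\varepsilon$, I would first fix $m$ with $(1-\delta/2)^m<\varepsilon$ and only then pick $\eta<\varepsilon/m$, so that $\psi:=\varphi^{\otimes m}$, $N:=n_k^m$ satisfy everything.

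The main obstacle is precisely this order of quantifiers: the deficiency $\delta$ must be secured \emph{before} $\varepsilon$ (hence $m$) is chosen, since the $m$-fold tensor power inflates the multiplicative error by a factor $m$ while it is the $m$-th power of the trace that must be made small. Everything else is bookkeeping: the identity $||\sigma-\tau||_2^2 = 2\cdot(\text{fraction of coordinates where }\sigma,\tau\text{ disagree})$ for $\sigma,\tau\in P_N$, faithfulness of the trace on the ultraproduct, and checking that the diagonal construction over $(F_j,1/j)$ really produces a morphism.
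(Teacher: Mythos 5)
Your proposal is correct and follows essentially the same route as the paper: the key observation that injectivity forces $|Tr(\Theta_0(g))|<1$ for $g\neq e$, the tensor-power trick to drive the trace to zero (exploiting multiplicativity of the normalized trace under tensor products), and a diagonal argument to assemble the final morphism. The only difference is one of exposition — the paper tensors the ultraproduct morphism itself to get genuine morphisms $\Theta^{(m)}$ and leaves the diagonalization implicit, whereas you tensor finite-level almost-morphisms and spell out the quantifier bookkeeping explicitly; both hinge on the same two facts.
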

\begin{proof}
A morphism $\Theta$ such that $Tr(\Theta(g))=0$ for any $g\neq e$
is clearly injective. For the reverse implication let
$\Theta:G\to\Pi_{k\to\omega}P_{n_k}$ be an injective morphism. If
$|Tr(\Theta(g))=1|$ then $Tr(\Theta(g))=1$ and $g=e$. In the end
we have $|Tr(\Theta(g))|<1$ for any $g\neq e$.

Construct $\Theta^{(m)}=\Theta\otimes\Theta\otimes\ldots
\otimes\Theta$ ($m$ times tensor product), i.e.
$\Theta^{(m)}(g)=\Pi_{k\to\omega}u_g^k\otimes u_g^k\otimes\ldots
\otimes u_g^k$, where $\Theta(g)=\Pi_{k\to\omega}u_g^k$. This is a
representation of $G$ on $\Pi_{k\to\omega}P_{n_k^m}$. Then
$Tr(\Theta^{(m)}(g))= Tr(\Theta(g))^m$. This means that
$Tr(\Theta^{(m)}(g)) \to_{m\to\infty}0$ for $g\neq e$. A diagonal
argument will finish the proof.
\end{proof}

\begin{de}
A \emph{sofic representation of G} is a group morphism
$\Theta:G\to\Pi_{k\to\omega}P_{n_k}$ such that $Tr(\Theta(g))=0$
for any $g\neq e$, where $\{n_k\}_k$ is any sequence of natural
numbers such that $n_k\to_{k\to\infty}\infty$.
\end{de}

\begin{nt}
Let $\Theta:G\to\Pi_{k\to\omega}P_{n_k}$ be a group morphism,
$\Theta=\Pi_{k\to\omega}\theta_k$. Let $\{r_k\}_k$ be a sequence
of natural numbers. Define $\Theta\otimes
1_{r_k}:G\to\Pi_{k\to\omega}P_{n_kr_k}$, $\Theta\otimes
1_{r_k}=\Pi_{k\to\omega}\theta_k\otimes 1_{r_k}$. We shall call
$\Theta\otimes 1_{r_k}$ an \emph{amplification} of $\Theta$.
\end{nt}

\begin{nt}\label{direct sum}
There is also a \emph{direct sum} of two sofic representations. If
$\Theta:G\to\Pi_{k\to\omega}P_{n_k}$, $\Theta=\Pi_{k\to\omega}
\theta_k$ and $\Phi:G\to\Pi_{k\to\omega}P_{m_k}$,
$\Phi=\Pi_{k\to\omega}\phi_k$ then define
$\Theta\oplus\Phi:G\to\Pi_{k\to\omega}P_{n_k+m_k}$ by
$\Theta\oplus\Phi=\Pi_{k\to\omega} \theta_k\oplus\phi_k$.
\end{nt}

We shall need the following lemma from \cite{Pa}. We also include
a short proof for the reader's convenience.

\begin{lemma}\label{permutations}
Let $\{e_i|i\in\nz\}$ be projections in $\Pi_{k\to\omega}D_{n_k}$
such that $\sum_ie_i=1$. Let $\{u_i|i\in\nz\}$ be unitary elements
in $\Pi_{k\to\omega}P_{n_k}$ such that $v=\sum_ie_iu_i$ is a
unitary. Then $v\in\Pi_{k\to\omega}P_{n_k}$.
\end{lemma}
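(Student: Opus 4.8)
The plan is to reduce everything to one elementary combinatorial fact about finite permutation matrices and then transfer it through the ultraproduct, the two auxiliary inputs being that partial sums of $\sum_ie_i$ converge in $\|\cdot\|_2$ and that $\Pi_{k\to\omega}P_{n_k}$ is closed in $\|\cdot\|_2$. First I would reduce to finitely many terms at the cost of a small unitarity defect. Since the $e_i$ are pairwise orthogonal with $\sum_ie_i=1$, the partial sums $p_N=\sum_{i\le N}e_i$ increase to $1$, so $Tr(q_N)\to 0$ where $q_N=1-p_N$. For each $N$ set $v_N:=p_Nv+q_Nu_1=(e_1+q_N)u_1+\sum_{2\le i\le N}e_iu_i$, which is again of the shape $\sum(\text{diagonal projection})\cdot(\text{permutation})$ over the finite orthogonal partition $\{e_1+q_N,e_2,\ldots,e_N\}$ of $1$. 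A direct computation gives $v_N-v=q_N(u_1-v)$ and $v_N^*v_N-1=u_1^*q_Nu_1-v^*q_Nv$, hence $\|v_N-v\|_2\le 2\,Tr(q_N)^{1/2}$ and $\|v_N^*v_N-1\|_2\le 2\,Tr(q_N)^{1/2}$, both tending to $0$. So it suffices to prove: if $w=\sum_{i=1}^Nf_iw_i$ with the $f_i$ orthogonal diagonal projections summing to $1$ and $w_i\in\Pi_{k\to\omega}P_{n_k}$, then $\mathrm{dist}_{\|\cdot\|_2}(w,\Pi_{k\to\omega}P_{n_k})\le\sqrt2\,\|w^*w-1\|_2$.

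For this combinatorial core, lift $w_i=\Pi_{k\to\omega}\sigma_i^k$ with $\sigma_i^k\in P_{n_k}$ and $f_i=\Pi_{k\to\omega}\mathbf 1_{B_i^k}$; disjointifying the supports successively in $i$ changes each representative only by a set of $\mu_{n_k}$-measure $\to_\omega 0$ and hence not the limit $f_i$, so we may assume $\{B_1^k,\ldots,B_N^k\}$ is an honest partition of $\{1,\ldots,n_k\}$ for every $k$. Put $w_k=\sum_{i=1}^N\mathbf 1_{B_i^k}\sigma_i^k\in M_{n_k}$, so $\Pi_{k\to\omega}w_k=w$. Since the $B_i^k$ partition the rows, each row of $w_k$ coincides with a row of a single $\sigma_i^k$, so $w_k$ is the $0$–$1$ matrix of a function $\tau_k\colon\{1,\ldots,n_k\}\to\{1,\ldots,n_k\}$. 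Then $w_k^*w_k$ is diagonal with entries $|\tau_k^{-1}(j)|$, whence $\|w_k^*w_k-1\|_2^2=\frac1{n_k}\sum_j(|\tau_k^{-1}(j)|-1)^2\ge a_k/n_k$, where $a_k$ is the number of points missed by $\tau_k$. A counting argument (the total over-counting $\sum_{|\tau_k^{-1}(j)|\ge2}(|\tau_k^{-1}(j)|-1)$ equals $a_k$, and $t\le 2(t-1)$ for $t\ge2$) shows $\tau_k$ becomes a bijection after redefining it on exactly $a_k$ points; the resulting permutation matrix $P_k$ then satisfies $\|w_k-P_k\|_2^2\le 2a_k/n_k\le 2\|w_k^*w_k-1\|_2^2$. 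Passing to $\omega$-limits, $\Pi_{k\to\omega}P_k\in\Pi_{k\to\omega}P_{n_k}$ and $\|w-\Pi_{k\to\omega}P_k\|_2\le\sqrt2\,\|w^*w-1\|_2$. Applied directly to $w=v$ when there are only finitely many $e_i$ (so the defect is $0$) this already proves the lemma in that case.

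Finally, to pass from the finite case to the general one, one needs $\Pi_{k\to\omega}P_{n_k}$ closed in $\|\cdot\|_2$: if $x=\Pi_{k\to\omega}x_k$ is a $\|\cdot\|_2$-limit of a sequence $\Pi_{k\to\omega}\sigma_k^{(N)}$ with $\sigma_k^{(N)}\in P_{n_k}$, the standard diagonal selection along $\omega$ — using that each $P_{n_k}$ is a finite, hence discrete, set — produces $\sigma_k\in P_{n_k}$ with $\Pi_{k\to\omega}\sigma_k=x$. Combining this with the first paragraph, the permutations $\Pi_{k\to\omega}P_k^{(N)}$ produced from $v_N$ satisfy $\|v-\Pi_{k\to\omega}P_k^{(N)}\|_2\le(2+2\sqrt2)\,Tr(q_N)^{1/2}\to0$, so $v\in\Pi_{k\to\omega}P_{n_k}$. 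I expect the only genuinely delicate point to be the bookkeeping in the combinatorial core: getting the approximation constant uniform in $n_k$ (so that it survives the $\omega$-limit), together with the routine but necessary verification that $\Pi_{k\to\omega}P_{n_k}$ is $\|\cdot\|_2$-closed. Everything else is direct computation.
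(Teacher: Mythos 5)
Your proof is correct, and its combinatorial core --- realizing each $v^k$ (your $w_k$) as the $0$--$1$ matrix of a function, bounding the number of missed columns by the unitarity defect, and repairing it to a permutation at $\|\cdot\|_2$-cost $2a_k/n_k$ --- is exactly the argument in the paper. The only difference is bookkeeping for the infinite sum: the paper chooses representatives $e_i^k$ with $\sum_ie_i^k=1_{n_k}$ exactly for each $k$ (a saturation-type choice) and applies the combinatorial step once, whereas you truncate, prove a quantitative stability estimate, and invoke $\|\cdot\|_2$-closedness of $\Pi_{k\to\omega}P_{n_k}$; both routes are sound.
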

\begin{proof}
Using the equation $\sum_ie_i=1$ we can construct projections
$e_i^k\in D_{n_k}$ such that:
\begin{enumerate}
\item $e_i=\Pi_{k\to\omega}e_i^k$; \item $\sum_ie_i^k=1_{n_k}$.
\end{enumerate}
By hypothesis we have $u_i=\Pi_{k\to\omega}u_i^k$ where $u_i^k\in
P_{n_k}$. If $v^k=\sum_ie_i^ku_i^k$ then $v=\Pi_{k\to\omega}v^k$,
but $v^k$ are not necessary unitary matrices. However $v^k$ is
still a matrix only with $0$ and $1$ entries and exactly one entry
of $1$ on each row.

We need to estimate the number of columns in $v^k$ having only $0$
entries. Denote this number by $r_k$. Then $v^{k*}v^k$ is a
diagonal matrix having $r_k$ entries of $0$ on the diagonal. This
implies:
\[||v^{k*}v^k-Id||_2^2\geq\frac{r_k}{n_k}.\]
Because $\Pi_{k\to\omega}v^{k*}v^k=1$ we have
$r_k/n_k\to_{k\to\omega}0$. We now construct $w^k$ as follows. The
matrix $v^k$ has $n_k-r_k$ columns with at least one nonzero
entry. For each such column $j$ chose a row $i$ such that
$v^k(i,j)=1$. Let $w^k(i,j)=1$. In this way we have $n_k-r_k$
nonzero entries in $w^k$, all of them distributed on different
rows and different columns. Choose a bijection between the
remaining $r_k$ rows and $r_k$ columns and complete $w^k$ to a
permutation matrix. Then:
\[||v^k-w^k||_2^2=\frac{2r_k}{n_k}.\]
Combined with $r_k/n_k\to_{k\to\omega}0$ we get
$v=\Pi_{k\to\omega}w^k$. This will prove the lemma.
\end{proof}

\subsection{The metric space $Sof(G,P^\omega)$}

\begin{de}
For a countable group $G$ define $Sof(G,P^\omega)$ the set of
sofic representations of $G$, factored by the following
equivalence relation: $(\Theta_1:G\to\Pi_{k\to\omega}P_{n_k})\sim
(\Theta_2:G\to\Pi_{k\to\omega}P_{m_k})$ iff there are two
sequences of natural numbers $\{r_k\}_k$ and $\{t_k\}_k$ such that
$n_kr_k=m_kt_k$ for any $k$ and there exists
$u\in\Pi_{k\to\omega}P_{n_kr_k}$ such that $\Theta_2\otimes
1_{t_k}=Ad u\circ(\Theta_1\otimes 1_{r_k})$.
\end{de}

\begin{nt}
For a sofic representation $\Theta$ we shall denote by $[\Theta]$
its class in $Sof(G,P^\omega)$.
\end{nt}

\begin{ob}\label{amenablegroups}
By Theorem 2 of Gabor Elek and Endre Szabo from \cite{El-Sza2},
the space $Sof(G,P^\omega)$ has exactly one point iff the group
$G$ is amenable.
\end{ob}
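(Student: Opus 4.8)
I sketch a proof of the two implications of this statement of \cite{El-Sza2} separately. \emph{Amenability $\Rightarrow$ uniqueness.} The plan is to show that any two sofic representations $\Theta=\Pi_{k\to\omega}\theta_k$ and $\Phi=\Pi_{k\to\omega}\phi_k$ of an amenable group become conjugate after an amplification, by producing at the finite level a common ``tiled'' model for $\theta_k$ and $\phi_k$, up to $\|\cdot\|_2$-errors that vanish along $\omega$. Fix a F{\o}lner sequence $(F_j)_j$ in $G$. The key external input is the Ornstein--Weiss quasi-tiling theorem: for every $\ve>0$ and every finite $K\subseteq G$ there is a finite subfamily of $(F_j)_j$ such that any sufficiently good sofic approximation $\theta\colon G\to P_n$ admits, outside a subset of density less than $\ve$, a partition into translates of these F{\o}lner sets each realising left translation inside the corresponding $F_j$ (with the way neighbouring pieces are glued dictated by the tiling), and this local picture depends only on $G$, not on $\theta$. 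Hence, after conjugating each $\theta_k$ by a suitable permutation and completing the matching arbitrarily on the small remainder, $\theta_k$ lies within $\|\cdot\|_2$-distance $\to_\omega 0$ of a model built from the pieces $F_j$ with multiplicities $c_j^k$, and likewise $\phi_k$ with multiplicities $d_j^k$.

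It remains to match the two models. One first collapses to a single tile: each $F_j$ is itself quasi-tiled, up to arbitrarily small density, by translates of a fixed $F_{j'}$ as soon as $j'$ is small relative to $j$, so iterating the decomposition and running a diagonal argument over $\omega$ (to upgrade the approximations to equalities in the ultraproduct) reduces both $\Theta$ and $\Phi$ to models using a single piece $F_{j(k)}$, with $j(k)\to\infty$ and multiplicities $a_k,b_k$ satisfying $a_k|F_{j(k)}|\approx n_k$ and $b_k|F_{j(k)}|\approx m_k$. Amplifying $\Theta$ by $m_k$ and $\Phi$ by $n_k$, the two models now live on $n_km_k$ points with multiplicities $a_km_k\approx b_kn_k\approx n_km_k/|F_{j(k)}|$, so they coincide up to a part of vanishing density that one discards. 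The conjugating element is assembled block by block as $\sum_ie_iu_i$, with $e_i\in\Pi_{k\to\omega}D_{n_km_k}$ the projection onto the $i$-th block and $u_i\in\Pi_{k\to\omega}P_{n_km_k}$ the permutation matching the two copies of $F_{j(k)}$ in that block, and Lemma \ref{permutations} guarantees this element is again a permutation in $\Pi_{k\to\omega}P_{n_km_k}$. Hence $[\Theta]=[\Phi]$, so $Sof(G,P^\omega)$ has exactly one point. I expect the bookkeeping of the errors accumulated in the iterated quasi-tiling, together with the matching of multiplicities across the two amplifications, to be the main technical obstacle.

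\emph{Non-amenability $\Rightarrow$ at least two points.} Here the plan is to separate two sofic representations by an invariant of the equivalence class. A concrete candidate is a spectral gap property of the Koopman representation on $L^2$ of the Loeb space: whether, on the orthogonal complement of the $G$-invariant vectors, there are no almost invariant unit vectors. This is stable under conjugation, and under an amplification $\Theta\mapsto\Theta\otimes 1_{r_k}$ the Koopman representation is tensored with a trivial representation, which enlarges the space of invariant vectors by exactly the amount needed to preserve the gap on its complement; so the property descends to $Sof(G,P^\omega)$. By the first part, every sofic representation of an amenable group is conjugate to a F{\o}lner model, whose only invariant vectors are the constants and which carries almost invariant unit vectors orthogonal to them, so the property fails identically in the amenable case. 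The remaining task --- and the real obstacle --- is to build, for non-amenable $G$, sofic representations realising both values of the invariant; and since no single spectral gap invariant separates the sofic representations of a property $(T)$ group, one needs in general a finer ergodic-theoretic or $\ell^2$-theoretic invariant of the action on the Loeb space. Carrying this out is the content of Theorem 2 of \cite{El-Sza2}.
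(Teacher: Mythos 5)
This observation is not proved in the paper at all: it is quoted verbatim from Theorem~2 of \cite{El-Sza2}, so the paper's ``proof'' is a citation and any argument you give is necessarily a reconstruction of Elek--Szab\'o's. Your first direction (amenability implies a single point) follows their actual route: Ornstein--Weiss quasi-tiling turns every good enough sofic approximation of an amenable group into an essentially unique tiled model, and two such models are matched after amplification. As a sketch this is acceptable, though two points need care. The reduction of the whole tiling to a \emph{single} F{\o}lner tile is not automatic (quasi-tilings genuinely use several tile sizes, and iterating the decomposition accumulates $\ve$-disjointness and boundary errors that must be controlled uniformly before passing to $\omega$). Also, your invocation of Lemma~\ref{permutations} is misplaced: the number of blocks in your tiled model grows with $k$, so the $e_i$ do not form a fixed countable family of projections in $\Pi_{k\to\omega}D_{n_km_k}$ as that lemma requires; the correct move is simply to define the conjugating permutation at each finite level $k$ by matching the two tilings block by block and completing arbitrarily on the leftover set of vanishing density, with no lemma needed.

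The second direction contains a genuine gap. You propose to separate sofic representations of a non-amenable group by a spectral gap property of the Koopman representation on the Loeb space, then correctly observe that this invariant cannot work for property $(T)$ groups (for which no action has almost invariant vectors orthogonal to the invariants), and conclude by writing that producing the right invariant ``is the content of Theorem~2 of \cite{El-Sza2}.'' That is not a proof of the implication; it is an acknowledgment that you do not have one, and it is circular to defer to the statement being proved. To close this you must actually exhibit, for every non-amenable sofic group, two sofic representations that remain non-conjugate after all amplifications --- for instance by producing one representation for which some explicit asymptotic invariant of the associated equivalence relation on the Loeb space (such as hyperfiniteness of the restriction to a positive-measure piece, or the cost/expansion of the induced graph structure) takes two different values; this is where the real content of the non-amenable half of Elek--Szab\'o's theorem lies, and your proposal does not supply it.
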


In order to define a metric on $Sof(G,P^\omega)$ we need to fix a
counting of the group $G$. So let $G=\{g_0,g_1,\ldots\}$ where
$g_0=e$. For $[\Theta],[\Phi]\in Sof(G,P^\omega)$ define:
\[d([\Theta],[\Phi])=inf\{\big(\sum_{i=1}^\infty\frac1{4^i}||(\Theta\otimes
1)(g_i)-p(\Phi\otimes
1)(g_i)p^*||_2^2\big)^\frac12:\{n_k\}_k;p\in\Pi_{k\to\omega}P_{n_k}\}\]

The infimum is taken over all the sequences $\{n_k\}_k$ such that
the two sofic representations $\Theta$ and $\Phi$ have
amplifications in that dimension. It is clear that this definition
does not depend on $\Theta$ and $\Phi$, but only of their classes
in $Sof(G,P^\omega)$.

Due to a diagonal argument we can see that the infimum in the
definition is attain. This implies that $d$ is indeed a distance.
Also $Sof(G,P^\omega)$ is complete with this metric.

\section{The convex structure on the set of sofic embeddings}

Let us first recall what a metric space with a convex-like
structure is. The next section is from \cite{Br}.

\subsection{Metric spaces with a convex-like structure}

Let $(X, d)$ be a complete metric space which is bounded (there is
a constant $C$ such that $d(x,y)\leq C$ for all $x,y\in X$). In
order to provide an abstract convex-like structure on $X$ we need
to define the element $t_1x_1+t_2x_2+\ldots t_nx_n$, where
$x_1,\ldots,x_n\in X$ and $0\leq t_i\leq 1$ such that
$\sum_{i=1}^n t_i=1$. We shall ask for the following axioms (as
Nathanial Brown put it: "properties one would expect if $X$ were
an honest convex subset of a bounded ball in some normed linear
space")
\begin{enumerate}
\item (commutativity)
$t_1x_1+\ldots+t_nx_n=t_{\sigma(1)}x_{\sigma(1)}+\ldots+t_{\sigma(n)}x_{\sigma(n)}$
for every permutation $\sigma\in Sym(n)$; \item (linearity) if
$x_1=x_2$, then $t_1x_1+t_2x_2+\ldots+t_nx_n=(t_1+t_2)x_1+t_3x_3+
\ldots+t_nx_n$; \item (scalar identity) if $t_i = 1$, then $t_1x_1
+\ldots+t_nx_n=x_i$; \item (metric compatibility)
$d(t_1x_1+\ldots+t_nx_n,s_1x_1+\ldots s_nx_n)\leq
C\sum_i|t_i-s_i|$ and $d(t_1x_1+\ldots+t_nx_n,t_1y_1+\ldots+
t_ny_n)\leq\sum_it_id(x_i,y_i)$; \item (algebraic compatibility)
\[t\left(\sum_{i=1}^nt_ix_i\right)+(1-t)\left(\sum_{j=1}^m s_jy_j\right)=\sum_{i=1}^n
tt_ix_i+\sum_{j=1}^m(1-t)s_jy_j.\]
\end{enumerate}

In \cite{Ca-Fr}, Valerio Capraro and Toblias Fritz proved that
this axioms are enough to deduce that $X$ is a closed convex
subset in an abstractly constructed Banach space.

\subsection{Convex combinations of sofic representations}

We now define the convex-like structure on $Sof(G,P^\omega)$.

\begin{de}
Let $n\in\nz$ and for $i=1,2,\ldots,n$ let
$\Theta_i:G\to\Pi_{k\to\omega}P_{m_k^i}$ be a sofic representation
and $\lambda_i\geq0$ such that $\sum_{i=1}^n\lambda_i=1$. Define
$\sum_{i=1}^n\lambda_i\Theta_i$ as follows: choose natural numbers
$r_k^i$ such that $\lim_{k\to\omega}m_k^jr_k^j/\sum_{i=1}^n
m_k^ir_k^i=\lambda_j$ for any $j=1,2,\ldots n$ and set
$\sum_{i=1}^n\lambda_i\Theta_i:G\to P_{\sum_{i=1}^n m_k^ir_k^i}$,
$\sum_{i=1}^n\lambda_i\Theta_i= \oplus_{i=1}^n(\Theta_i\otimes
1_{r_k^i})$.
\end{de}

\begin{p}
$[\sum_{i=1}^n\lambda_i\Theta_i]$ is well defined, i.e. depends
only on $[\Theta_i]$ and $\lambda_i$, $i=1,\ldots,n$. We shall
denote this object by $\sum_{i=1}^n\lambda_i[\Theta_i]$.
\end{p}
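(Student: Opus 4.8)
The plan is to show that the class of $\sum_{i=1}^n\lambda_i\Theta_i$ is insensitive to (a) the choice of the multiplicities $r_k^i$ satisfying the prescribed limit condition, and (b) replacement of each $\Theta_i$ by an equivalent representation. I would first reduce to the binary case $n=2$: by associativity-type bookkeeping the general statement follows from the $n=2$ case applied repeatedly, so it suffices to argue there. Throughout, the main tool is the fact, recorded just before Lemma~\ref{permutations} (and in the definition of $d$), that the infimum defining $d$ is attained and that two representations are equivalent precisely when a single conjugating permutation in $\Pi_{k\to\omega}P_{N_k}$ intertwines suitable common amplifications.

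First I would handle independence from the $r_k^i$. Suppose $(r_k^i)_k$ and $(s_k^i)_k$ both satisfy $\lim_{k\to\omega} m_k^j r_k^j/\sum_i m_k^i r_k^i=\lambda_j$ and likewise for $s$. Passing to the amplifications $\Theta_i\otimes 1_{s_k^i}$ of $\Theta_i\otimes 1_{r_k^i}$ (and vice versa) puts both direct sums inside representations on dimension $\Pi_{k\to\omega} P_{(\sum_i m_k^i r_k^i)(\sum_j m_k^j s_k^j)}$ — more precisely one amplifies the first direct sum by $1_{t_k}$ with $t_k=\sum_j m_k^j s_k^j$ and the second by $1_{u_k}$ with $u_k=\sum_i m_k^i r_k^i$, and checks the resulting total dimensions agree. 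The point is that $(\Theta_i\otimes 1_{r_k^i})\otimes 1_{s_k^j}$ and $(\Theta_i\otimes 1_{s_k^i})\otimes 1_{r_k^j}$ are amplifications of the \emph{same} $\Theta_i$, hence unitarily equivalent by a permutation acting within each summand; assembling these block-diagonally (after matching up the block sizes, which is exactly where the limit conditions on $r_k^i,s_k^i$ are used to guarantee the $i$-th blocks have asymptotically equal proportion $\lambda_i$) gives a permutation in $\Pi_{k\to\omega}P_{N_k}$ conjugating one amplified direct sum onto the other. The block sizes will not match exactly for each finite $k$, only up to an error that is $o(n_k)$ by the limit hypothesis, so one adjusts the permutation on a vanishing fraction of coordinates — exactly the kind of estimate carried out in the proof of Lemma~\ref{permutations}, $\|v^k-w^k\|_2^2 = 2r_k/n_k\to 0$.

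Next, independence from the choice of representatives $\Theta_i$. If $\Theta_i\sim\Theta_i'$ then by definition there are multiplicities making $\Theta_i\otimes 1_{p_k^i}=\mathrm{Ad}\,w^i\circ(\Theta_i'\otimes 1_{q_k^i})$ for a permutation $w^i\in\Pi_{k\to\omega}P_{\cdot}$. After further amplifying all summands to a common dimension (using step one to absorb the extra multiplicities $p_k^i, q_k^i$ without changing the class), the block-diagonal permutation $\oplus_i (1\otimes w^i)$ — suitably amplified — conjugates $\oplus_i(\Theta_i\otimes 1_{r_k^i})$ onto $\oplus_i(\Theta_i'\otimes 1_{\tilde r_k^i})$, and the new multiplicities $\tilde r_k^i$ still satisfy the required limit, so the result is a legitimate representative of $\sum\lambda_i[\Theta_i']$. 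Combining with step one, both sides have the same class.

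The main obstacle I anticipate is purely combinatorial: reconciling the various multiplicities so that at each finite index $k$ the direct-sum dimensions genuinely coincide (the equivalence relation demands $n_k r_k = m_k t_k$ \emph{for every} $k$, not just in the limit), while the proportions $\lambda_i$ are only controlled $\omega$-asymptotically. The resolution is the standard one in this subject: choose the common dimension $N_k$ as a product of all the relevant $m_k^i$'s and multiplicities, carve it into blocks whose sizes are the nearest integers to $\lambda_i N_k$, absorb the $o(N_k)$ discrepancy into a permutation that differs from the ``ideal'' block-diagonal one on a set of coordinates of density tending to $0$ along $\omega$, and invoke $\|\cdot\|_2$-smallness exactly as in Lemma~\ref{permutations}. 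Once that bookkeeping is set up, each verification is a direct check that the conjugating element lies in $\Pi_{k\to\omega}P_{N_k}$ and intertwines the two representations.
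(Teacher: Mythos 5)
The paper gives no proof of this proposition at all --- it is stated as routine, and the following proposition's ``proof'' just declares the verifications trivial --- so there is nothing to compare your argument against; I can only judge it on its own terms, and it is correct. Your two-step reduction (independence of the multiplicities $r_k^i$, then independence of the representatives $\Theta_i$) is the natural way to organize the verification, and you have correctly identified the one point that is not purely formal: after amplifying both candidate direct sums to the common dimension $N_kM_k$ with $N_k=\sum_i m_k^i r_k^i$ and $M_k=\sum_i m_k^i s_k^i$, the $i$-th blocks have dimensions $m_k^i r_k^i M_k$ and $m_k^i s_k^i N_k$, which agree only asymptotically in proportion (both tend to $\lambda_i$), not exactly at each $k$. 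Your resolution --- match whole $m_k^i$-sized copies block by block as far as possible and send the leftover coordinates, a set of density tending to $0$ along $\omega$, anywhere, so that the conjugated permutation agrees with the target up to a $\|\cdot\|_2$-error vanishing at $\omega$ --- is exactly right, and the dimension bookkeeping ($N_kM_k=M_kN_k$, and $m_k^ip_k^i=(m_k^i)'q_k^i$ in the second step, so the new multiplicities still satisfy the limit condition) checks out. Two cosmetic remarks: the metric $d$ and the attainment of its infimum play no role here, since equivalence is defined directly by a conjugating permutation on common amplifications; and the reduction to $n=2$ is unnecessary (the general-$n$ argument is verbatim the same) and, as stated, quietly presupposes an associativity fact of the same nature as what is being proved --- better to run the argument for general $n$ directly.
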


\begin{p}
The convex structure defined on $Sof(G,P^\omega)$ obeys the axioms
$(1)-(5)$ of abstract convex-like structures.
\end{p}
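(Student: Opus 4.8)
The plan is to verify each of the five axioms directly from the definition of $\sum_{i=1}^n\lambda_i\Theta_i$ as a direct sum of amplifications $\oplus_{i=1}^n(\Theta_i\otimes 1_{r_k^i})$, where the multiplicities $r_k^i$ are chosen so that $\lim_{k\to\omega}m_k^jr_k^j/\sum_i m_k^ir_k^i=\lambda_j$. Throughout, one works at the level of representatives and then invokes the previous proposition (well-definedness) to pass to classes in $Sof(G,P^\omega)$. Commutativity (axiom 1) is immediate, since permuting the blocks of a direct sum produces a representation that is conjugate by a permutation matrix to the original one (after passing to a common amplification if the dimension sequences differ), hence equal in $Sof(G,P^\omega)$. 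Scalar identity (axiom 3) follows because if $t_i=1$ then all other $t_j=0$, and one may choose $r_k^j$ for $j\neq i$ growing so slowly that $\oplus_{j\neq i}(\Theta_j\otimes 1_{r_k^j})$ contributes a block of trace-fraction tending to $0$; the direct sum is then equivalent to $\Theta_i$ itself (formally, one checks that the distance to $[\Theta_i]$ is $0$ using the metric-compatibility-type estimate on how a vanishing block affects the $\|\cdot\|_2$ norm).

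For linearity (axiom 2), if $\Theta_1=\Theta_2$ (as representations, or after noting we may take equal representatives) then $(\Theta_1\otimes 1_{r_k^1})\oplus(\Theta_2\otimes 1_{r_k^2})=\Theta_1\otimes 1_{r_k^1+r_k^2}$ up to a permutation conjugation, and the trace-fraction of this combined block is $\lim_{k\to\omega}m_k^1(r_k^1+r_k^2)/\sum_i m_k^ir_k^i=\lambda_1+\lambda_2$; so the $n$-term combination collapses to the $(n-1)$-term combination with weight $\lambda_1+\lambda_2$ on $\Theta_1$, which is exactly the assertion. Algebraic compatibility (axiom 5) is the bookkeeping identity: both sides are direct sums of amplifications of the $\Theta_i$'s and $\Phi_j$'s, and one checks that the limiting trace-fractions agree — on the left the block for $x_i$ has fraction $t\cdot t_i$ and the block for $y_j$ has fraction $(1-t)\cdot s_j$, which is precisely what the right-hand side prescribes — so after choosing compatible multiplicity sequences the two representations are equal in $Sof(G,P^\omega)$.

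The main work is axiom 4 (metric compatibility), and this is where I would spend most of the effort. For the first inequality, given two weight vectors $(t_i)$ and $(s_i)$ on the same points $x_i=[\Theta_i]$, one realizes both $\sum t_i\Theta_i$ and $\sum s_i\Theta_i$ on a common dimension sequence: choose multiplicities $r_k^i$ and $\tilde r_k^i$ with a common total $N_k=\sum_i m_k^i r_k^i=\sum_i m_k^i\tilde r_k^i$ and limiting fractions $t_i$ and $s_i$ respectively, and arrange the blocks so that the two direct sums agree on the overlap of each pair of corresponding $\Theta_i$-blocks. Then $(\sum t_i\Theta_i)(g_\ell)$ and $(\sum s_i\Theta_i)(g_\ell)$ agree as permutation matrices except on the symmetric difference of the $\Theta_i$-blocks, whose trace-fraction is at most $\sum_i|t_i-s_i|$ (up to a factor handled by the bounded diameter constant $C=$ the value of $d$ on the whole space, which is finite since every $\|(\Theta\otimes1)(g)-p(\Phi\otimes1)(g)p^*\|_2^2\le 2$ and $\sum 4^{-i}<\infty$); summing the weighted $\|\cdot\|_2^2$ over $\ell$ and taking square roots gives $d(\sum t_ix_i,\sum s_ix_i)\le C\sum_i|t_i-s_i|$. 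For the second inequality, fix the weights $t_i$ and let $x_i=[\Theta_i]$, $y_i=[\Phi_i]$; pick, for each $i$, a common amplification realizing the near-optimal conjugating permutation $p_i\in\Pi_{k\to\omega}P_{N_k^i}$ with $\big(\sum_\ell 4^{-\ell}\|(\Theta_i\otimes1)(g_\ell)-p_i(\Phi_i\otimes1)(g_\ell)p_i^*\|_2^2\big)^{1/2}<d(x_i,y_i)+\ve$, then choose block multiplicities so the $i$-th block has trace-fraction $t_i$ in both direct sums simultaneously, and set $p=\oplus_i p_i$. Since $\|\cdot\|_2^2$ is additive over the orthogonal blocks with the block weight $t_i$, one gets $\sum_\ell 4^{-\ell}\|(\sum t_i\Theta_i)(g_\ell)-p(\sum t_i\Phi_i)(g_\ell)p^*\|_2^2=\sum_i t_i\sum_\ell 4^{-\ell}\|\cdots\|_2^2<\sum_i t_i(d(x_i,y_i)+\ve)^2$; the square root is bounded by $\sqrt{\sum_i t_i}\cdot\max(\cdot)$ — more carefully, by convexity of $t\mapsto t^2$ one passes to $\sqrt{\sum_i t_i(d(x_i,y_i)+\ve)^2}\le \sum_i t_i(d(x_i,y_i)+\ve)$ is \emph{false} in general, so instead I would keep the squared quantity and use concavity of $\sqrt{\cdot}$ together with $\sum_i t_i=1$ to get the bound $\sum_i t_i\,(d(x_i,y_i)+\ve)$ directly from Jensen, then let $\ve\to 0$. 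The delicate point throughout is the simultaneous choice of multiplicity sequences $r_k^i$ realizing prescribed limiting fractions while keeping a \emph{common total dimension} for the two representations being compared, and making the block decompositions line up so that the two permutations differ only on a small-measure set; this is a diagonal/approximation argument of exactly the flavor already used implicitly for well-definedness, and I expect it to be the only genuinely non-routine step.
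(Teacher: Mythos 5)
Your verifications of axioms (1), (2), (3) and (5) are fine and are exactly the routine block-permutation and trace-fraction bookkeeping the paper has in mind when it says the verifications are trivial (the paper's own proof consists of that one sentence, so there is nothing more to compare against there). The genuine problem is axiom (4) -- precisely the point the paper flags as ``more technical'' -- and your treatment of it contains two real gaps. For the first inequality, aligning the blocks so that the two permutation matrices agree off a set of trace-fraction $\delta=\sum_i|t_i-s_i|$ gives $\|(\sum t_i\Theta_i)(g_\ell)-p(\sum s_i\Theta_i)(g_\ell)p^*\|_2^2\le 2\delta$, hence $d^2\le C'\delta$ and therefore only $d\le C''\sqrt{\delta}$. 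Your phrase ``taking square roots gives $d\le C\sum_i|t_i-s_i|$'' is a non sequitur: the square root lands on $(\sum_i|t_i-s_i|)^{1/2}$, which does not imply the required linear bound when $\delta$ is small. The quantity $\|u-v\|_2^2$ for permutation matrices is comparable to the \emph{measure} of the set where they disagree, so any argument that only controls that measure linearly in $\delta$ can never do better than $d\lesssim\sqrt{\delta}$.

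For the second inequality the same exponent problem appears and you notice it, but your repair does not work: additivity of $\|\cdot\|_2^2$ over the blocks gives $d\big(\sum t_ix_i,\sum t_iy_i\big)^2\le\sum_i t_i\,d(x_i,y_i)^2$, and Jensen (whether phrased via convexity of $t\mapsto t^2$ or concavity of $\sqrt{\cdot}$) yields $\big(\sum_i t_i a_i^2\big)^{1/2}\ge\sum_i t_i a_i$ -- the \emph{wrong} direction. So the block-diagonal conjugator $p=\oplus_i p_i$ only proves the weaker estimate $d\le\big(\sum_i t_i d(x_i,y_i)^2\big)^{1/2}$, and closing the gap requires an idea not present in your proposal (some exploitation of the infimum over non-block-diagonal permutations, or a different comparison altogether). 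One useful observation: the first inequality of axiom (4) is actually a formal consequence of the second one together with axioms (1), (2) and (5) -- writing $m_i=\min(t_i,s_i)$ and $\delta=\tfrac12\sum_i|t_i-s_i|$, both combinations equal $(1-\delta)z+\delta w$ and $(1-\delta)z+\delta w'$ for a common $z$, whence $d\le\delta\, d(w,w')\le\tfrac{C}{2}\sum_i|t_i-s_i|$ -- so if you can establish the second inequality you get the first for free, and that second inequality is the one genuinely open point in your write-up.
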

\begin{proof}
Verifications are trivial. Maybe the first part of axiom $(4)$ is
a little more technical.
\end{proof}

\subsection{Cutting sofic representations}

A way of constructing new sofic representations out of old ones is
by cutting a representation with a projection from the commutant.
This is actually the reverse operation of the direct sum as
intruduced in \ref{direct sum}. Let
$\Theta:G\to\Pi_{k\to\omega}P_{n_k}$ be a sofic representation,
$\Theta=\Pi_{k\to\omega}\theta_k$, where $\theta_k:G\to P_{n_k}$.
Let also $p\in\Theta(G)'\cap \Pi_{k\to\omega}D_{n_k}$. Choose
projections $p_k\in D_{n_k}$ such that $p=\Pi_{k\to\omega}p_k$.
Then $Tr(p_k)=\frac{m_k}{n_k}$, with $m_k\in\nz$. Define
$\Theta_p:G\to \Pi_{k\to\omega}P_{m_k}$ by
$\Theta_p(g)=\Pi_{k\to\omega}p_k\theta_k(g)p_k$. In fact
$\Theta_p(g)=p\Theta(g)p=p\Theta(g)$. By definition $\Theta_p$
depends on the choice of projections $p_k$, but it is easy to see
that $[\Theta_p]$ does not depend on this choice.

\begin{de}
If $\Theta:G\to\Pi_{k\to\omega}P_{n_k}$ and
$p\in\Theta(G)'\cap\Pi_{k\to\omega}D_{n_k}$ then define
$\Theta_p:G\to p(\Pi_{k\to\omega}P_{n_k})p$ by
$\Theta_p(g)=p\Theta(g)$.
\end{de}

\begin{ob}\label{recoveringembeddings}
If $\Theta=\sum_{i=1}^n\lambda_i\Phi_i$ then there exists
$p\in\Theta(G)'\cap\Pi_{k\to\omega}D_{\sum_{i=1}^n m_k^ir_k^i}$
with $Tr(p)=\lambda_i$ such that $[\Phi_i]=[\Theta_p]$.
\end{ob}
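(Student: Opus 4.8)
The plan is to produce the projection $p$ explicitly from the block structure of the direct sum. Recall that by definition $\Theta=\sum_{i=1}^n\lambda_i\Phi_i=\oplus_{i=1}^n(\Phi_i\otimes 1_{r_k^i})$ acts on $\Pi_{k\to\omega}P_{N_k}$ where $N_k=\sum_{i=1}^n m_k^ir_k^i$. For each fixed $i$, at the level of each $k$ the space of dimension $N_k$ decomposes as an orthogonal direct sum of blocks, one block of dimension $m_k^jr_k^j$ for each $j$; let $p_k^i\in D_{N_k}$ be the diagonal projection onto the $i$-th block, and set $p=\Pi_{k\to\omega}p_k^i$ (I would do this for a fixed $i$ and note the construction is uniform in $i$). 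Since $\theta_k\otimes 1_{r_k^j}$ preserves each block, $p_k^i$ commutes with $\oplus_{j}(\theta_k^j\otimes 1_{r_k^j})$, hence $p\in\Theta(G)'\cap\Pi_{k\to\omega}D_{N_k}$. Moreover $Tr(p_k^i)=m_k^ir_k^i/N_k$, so by the defining property of the numbers $r_k^j$ we get $Tr(p)=\lim_{k\to\omega}m_k^ir_k^i/N_k=\lambda_i$.

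The remaining point is to identify $[\Theta_p]$ with $[\Phi_i]$. By definition $\Theta_p(g)=p\Theta(g)$, and cutting each $\theta_k^{\oplus}$ by the block projection $p_k^i$ returns exactly the $i$-th summand, i.e. $p_k^i\big(\oplus_j(\theta_k^j(g)\otimes 1_{r_k^j})\big)p_k^i=\theta_k^i(g)\otimes 1_{r_k^i}$ after the obvious identification of $p_k^i(\cz^{N_k})$ with $\cz^{m_k^ir_k^i}$. Taking the ultraproduct, $\Theta_p$ is (conjugate by a fixed permutation to) $\Phi_i\otimes 1_{r_k^i}$, which is an amplification of $\Phi_i$; hence $[\Theta_p]=[\Phi_i\otimes 1_{r_k^i}]=[\Phi_i]$ in $Sof(G,P^\omega)$, using that the equivalence relation defining $Sof(G,P^\omega)$ identifies a representation with all its amplifications and unitary conjugates.

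The only mildly delicate step is the bookkeeping in choosing the $p_k^i$ so that simultaneously $p=\Pi_{k\to\omega}p_k^i$ is the correct projection, the $p_k^i$ are genuine diagonal projections of the right trace, and the identification of the cut-down with $\theta_k^i\otimes 1_{r_k^i}$ is implemented by a single permutation matrix that survives to the ultraproduct; this is routine but is where one must be careful, and it is essentially the inverse of the direct-sum construction of \ref{direct sum}. Everything else is a direct computation with traces and the definitions of amplification and of the equivalence relation on $Sof(G,P^\omega)$.
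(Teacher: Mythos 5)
Your argument is correct and is exactly the one the paper intends: the Observation is stated without proof precisely because, as the preceding paragraph notes, cutting is the reverse operation of the direct sum, so the block projection onto the $i$-th summand does the job. Your verification of the trace condition via $\lim_{k\to\omega}m_k^ir_k^i/\sum_j m_k^jr_k^j=\lambda_i$ and the identification of the cut-down with the amplification $\Phi_i\otimes 1_{r_k^i}$ (hence with $[\Phi_i]$ in $Sof(G,P^\omega)$) are the right details to check.
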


\subsection{Actions on the Loeb space}

If $u\in\Pi_{k\to\omega}P_{n_k}$ then $u(\Pi_{k\to\omega}D_{n_k})
u^*=\Pi_{k\to\omega}D_{n_k}$. Also the reverse is true: if
$x(\Pi_{k\to\omega}D_{n_k})x^*=\Pi_{k\to\omega}D_{n_k}$ then
$x=a\cdot u$, where $u\in\Pi_{k\to\omega}P_{n_k}$ and $a$ is a
unitary element of $\Pi_{k\to\omega}D_{n_k}$. In an operator
language the normalizer of $\Pi_{k\to\omega}D_{n_k}$ is
$\unit(\Pi_{k\to\omega}D_{n_k})\cdot\Pi_{k\to\omega}P_{n_k}$.

As $\Pi_{k\to\omega}D_{n_k}\simeq L^\infty((X_{n_k})_\omega,
\mu_\omega)$, $u$ defines an automorphism of
$((X_{n_k})_\omega,\mu_\omega)$. For a sofic representation we are
interested in the action of the commutant on this space. Such
actions were considered by David Kerr and Hanfeng Li in
\cite{Ke-Li}.

\begin{nt}
If $\Theta:G\to\Pi_{k\to\omega}P_{n_k}$ is a sofic representation
then we shall denote by $\alpha(\Theta)$ the action of
$\Theta(G)'\cap\Pi_{k\to\omega}P_{n_k}$ on the Loeb space
$((X_{n_k})_\omega,\mu_\omega)$.
\end{nt}

The goal of this article is to link the ergodicity of this action
with extreme points in the convex-like structure. We shall now
prove that ergodicity is preserved under amplifications. For this
we need the next lemma.

\begin{lemma}
Let $n,r\in\nz$ and $A_1,\ldots, A_r\subset\{1,\ldots n\}$. Then
exists $A\subset\{1,\ldots,n\}$ and $p\in Sym(r)$ such that:
\[\sum_{j=1}^r|A\triangle A_j|\leq
\sum_{j=1}^r|A_{p(j)}\triangle A_j|.\]
\begin{proof}
For $i=1,\ldots,n$ let $a_i=|\{j:i\in A_j\}|$. Define
$A=\{i:r<2a_i\}$ ($i$ is an element of $A$ iff more than half sets
$A_j$ contain $i$). Then $\sum_{j=1}^r|A\triangle
A_j|=\sum_{i=1}^nmin\{a_i,r-a_i\}$.

For $p\in Sym(r)$ define $R(p)=\sum_{j=1}^r|A_{p(j)}\triangle
A_j|$. We shall try to evaluate $\sum_{p\in Sym(r)}R(p)$. We want
to count how many times $i\in A_{p(j)}$ and $i\notin A_j$.

For $r-a_i$ different values we have $i\notin A_j$; $p(j)$ will be
a given fix value for $(r-1)!$ permutations in $Sym(r)$. For
another $a_i$ of this values we have $i\in A_{p(j)}$. The number
we are looking for is $a_i(r-a_i)(r-1)!$. It may also happen that
$i\notin A_{p(j)}$ and $i\in A_j$. In the end we have:
\[\sum_{p\in Sym(r)}R(p)=\sum_{i=1}^n2a_i(r-a_i)(r-1)!.\]
It follows that there exists $p\in Sym(r)$ such that
$R(p)\geq\sum_{i=1}^n2a_i(r-a_i)/r$. It is easy to see that
$min\{a_i,r-a_i\}\leq 2a_i(r-a_i)/r$ so $R(p)\geq
\sum_{j=1}^r|A\triangle A_j|$.
\end{proof}
\end{lemma}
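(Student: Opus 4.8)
The plan is to produce the set $A$ explicitly as the coordinatewise majority of the $A_j$, and to produce the permutation $p$ not by any direct construction but by an averaging argument over all of $Sym(r)$. The point is that the naive guess $p=\mathrm{id}$ makes the right‑hand side equal to $0$ and is therefore useless; one genuinely needs a permutation $p$ under which the $A_j$ get ``scrambled'' by a definite amount, and the cleanest way to see that such a $p$ exists is to show that the \emph{average} of $\sum_{j}|A_{p(j)}\triangle A_j|$ over $p\in Sym(r)$ is already large enough.

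Concretely, first I would set $a_i=|\{j:i\in A_j\}|$ for each $i\in\{1,\ldots,n\}$ and take $A=\{i:2a_i>r\}$. Expanding every symmetric difference coordinatewise, the element $i$ contributes to $\sum_{j=1}^r|A\triangle A_j|$ the number of indices $j$ on which ``$i\in A$'' and ``$i\in A_j$'' disagree, namely $r-a_i$ if $i\in A$ and $a_i$ otherwise; by the majority choice of $A$ this is $\min\{a_i,r-a_i\}$, so $\sum_{j=1}^r|A\triangle A_j|=\sum_{i=1}^n\min\{a_i,r-a_i\}$. Next, writing $R(p)=\sum_{j=1}^r|A_{p(j)}\triangle A_j|$, I would compute $\sum_{p\in Sym(r)}R(p)$ by switching the order of summation: for fixed $j$ the value $p(j)$ takes any prescribed value in $\{1,\ldots,r\}$ for exactly $(r-1)!$ permutations $p$, so a double count shows that each coordinate $i$ contributes $2a_i(r-a_i)(r-1)!$ in total, giving $\sum_{p}R(p)=(r-1)!\sum_{i=1}^n2a_i(r-a_i)$ and hence some $p$ with $R(p)\geq\frac1r\sum_{i=1}^n2a_i(r-a_i)$.

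What is then left is the elementary pointwise estimate $\min\{a_i,r-a_i\}\leq\frac{2}{r}\,a_i(r-a_i)$, which after assuming $a_i\leq r-a_i$ reduces to $r\leq 2(r-a_i)$, i.e. $2a_i\leq r$ (the case $a_i=0$ being trivial). Summing this over $i$ and chaining it with the two computations above yields $\sum_{j=1}^r|A\triangle A_j|\leq R(p)$. I expect the only real obstacle to be recognizing that the averaging over $Sym(r)$ is the right move: once that is in place the problem collapses to the trivial inequality just mentioned, whereas a greedy or Hall/matching‑type construction of a good $p$ looks considerably harder to control quantitatively.
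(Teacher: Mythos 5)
Your proof is correct and follows essentially the same route as the paper: the majority set $A=\{i:2a_i>r\}$ giving $\sum_j|A\triangle A_j|=\sum_i\min\{a_i,r-a_i\}$, the averaging of $R(p)$ over $Sym(r)$ via the double count yielding $(r-1)!\sum_i 2a_i(r-a_i)$, and the pointwise inequality $\min\{a_i,r-a_i\}\leq\frac{2}{r}a_i(r-a_i)$. Your verification of that last inequality is in fact slightly more careful than the paper's (which just calls it easy).
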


\begin{p}\label{amplificationpreservesergodicity}
Let $\Theta:G\to\Pi_{k\to\omega}P_{n_k}$ be a sofic representation
and $\{r_k\}$ a sequence of natural numbers. If $\alpha(\Theta)$
is ergodic then also $\alpha(\Theta\otimes 1_{r_k})$ is ergodic.
\end{p}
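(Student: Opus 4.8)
The plan is to characterise ergodicity of $\alpha(\Theta\otimes 1_{r_k})$ directly: I must show that any measurable set $E\subset((X_{n_kr_k})_\omega,\mu_\omega)$ which is invariant modulo null sets under $(\Theta\otimes 1_{r_k})(G)'\cap\Pi_{k\to\omega}P_{n_kr_k}$ has measure $0$ or $1$. Write $\Theta=\Pi_{k\to\omega}\theta_k$. Using the identifications $D_{n_kr_k}\simeq D_{n_k}\otimes D_{r_k}$ and $X_{n_kr_k}\simeq X_{n_k}\times X_{r_k}$, together with the fact that a measurable set modulo null is the same datum as a projection $\Pi_{k\to\omega}p_k$ in $\Pi_{k\to\omega}D_{n_kr_k}$, I may represent $E$ as $(E_k)_\omega$ with $E_k\subset X_{n_k}\times X_{r_k}$. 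For $j\in\{1,\dots,r_k\}$ put $A^k_j=\{x\in X_{n_k}:(x,j)\in E_k\}$, so that $E_k$ is recorded by the $r_k$ subsets $A^k_1,\dots,A^k_{r_k}$ of $X_{n_k}$.

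The first step is to use permutations of the second coordinate to show that $E$ does not really depend on that coordinate. For any $v=\Pi_{k\to\omega}v_k\in\Pi_{k\to\omega}P_{r_k}$ the permutation matrix $1_{n_k}\otimes v_k$ commutes with $\theta_k(g)\otimes 1_{r_k}$, so $1_{n_k}\otimes v$ belongs to the commutant and acts on the Loeb space by permuting the $X_{r_k}$-coordinate; explicitly, applying it replaces the slices $A^k_j$ by $A^k_{v_k^{-1}(j)}$. Since each $P_{r_k}$ is finite, choose $v_k\in P_{r_k}$ attaining $\max_{w\in P_{r_k}}\sum_{j=1}^{r_k}|A^k_{w(j)}\triangle A^k_j|$; invariance of $E$ under the corresponding element of $\Pi_{k\to\omega}P_{r_k}$ then forces
\[
\lim_{k\to\omega}\frac{1}{n_kr_k}\,\max_{w\in P_{r_k}}\,\sum_{j=1}^{r_k}|A^k_{w(j)}\triangle A^k_j|=0 .
\]
Now apply the preceding Lemma to the family $A^k_1,\dots,A^k_{r_k}\subset X_{n_k}$: for the majority set $A^k=\{x\in X_{n_k}:|\{j:x\in A^k_j\}|>r_k/2\}$ one has $\sum_{j}|A^k\triangle A^k_j|\le\max_{w}\sum_{j}|A^k_{w(j)}\triangle A^k_j|$, hence $\mu_\omega\big(E\,\triangle\,(A^k\times X_{r_k})_\omega\big)=\lim_{k\to\omega}\frac{1}{n_kr_k}\sum_j|A^k\triangle A^k_j|=0$. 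So $E$ agrees modulo null sets with $\hat E:=(A^k\times X_{r_k})_\omega$, which corresponds to the projection $q\otimes 1_{r_k}$ of $\Pi_{k\to\omega}D_{n_kr_k}$, where $q=\Pi_{k\to\omega}q_k\in\Pi_{k\to\omega}D_{n_k}$ and $q_k$ is the diagonal projection onto $A^k$. Let $Q\subset(X_{n_k})_\omega$ be the measurable set corresponding to $q$; note $\mu_\omega(\hat E)=\mu_\omega(Q)$.

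The second step transfers invariance to $Q$. For $u=\Pi_{k\to\omega}u_k\in\Theta(G)'\cap\Pi_{k\to\omega}P_{n_k}$, the element $u\otimes 1_{r_k}$ lies in $(\Theta\otimes 1_{r_k})(G)'\cap\Pi_{k\to\omega}P_{n_kr_k}$ and sends $\hat E$ to the set corresponding to $(uqu^*)\otimes 1_{r_k}$. Since the normalized trace on $\Pi_{k\to\omega}D_{n_kr_k}$ restricts on $\Pi_{k\to\omega}D_{n_k}\otimes 1_{r_k}$ to the normalized trace on $\Pi_{k\to\omega}D_{n_k}$, invariance of $\hat E=E$ under $u\otimes 1_{r_k}$ is precisely the statement $\mu_\omega(uQu^*\,\triangle\,Q)=0$. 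Thus $Q$ is invariant under $\alpha(\Theta)$, so by hypothesis $\mu_\omega(Q)\in\{0,1\}$, and therefore $\mu_\omega(E)=\mu_\omega(\hat E)=\mu_\omega(Q)\in\{0,1\}$, which is the desired ergodicity of $\alpha(\Theta\otimes 1_{r_k})$.

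The delicate point is the passage from ``$E$ is invariant under each single $1_{n_k}\otimes v$'' to the uniform estimate that triggers the Lemma: this works because the worst permutation is chosen blockwise and each $P_{r_k}$ is finite, so these choices assemble into a genuine element $\Pi_{k\to\omega}v_k$ of $\Pi_{k\to\omega}P_{r_k}$ against which the hypothesis applies. The remaining ingredients are routine: the identification $D_{n_kr_k}\simeq D_{n_k}\otimes D_{r_k}$ and the matching identification of the Loeb spaces, the behaviour of the normalized trace under restriction, and the fact that measurable sets modulo null in a Loeb space are represented by projections $\Pi_{k\to\omega}p_k$.
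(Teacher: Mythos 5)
Your proof is correct and follows essentially the same route as the paper's: reduce an invariant set $E$ to one of the form $A^k\times X_{r_k}$ by applying the combinatorial lemma to the slices $A^k_j$ and exploiting invariance under the coordinate-permuting elements $1_{n_k}\otimes v_k$ of the commutant, then transfer the problem to $\alpha(\Theta)$ via $u\mapsto u\otimes 1_{r_k}$. The only (immaterial) differences are that you argue directly that the projected set $Q$ is $\alpha(\Theta)$-invariant rather than deriving a contradiction from a moving element, and that you bound by the maximum over all permutations of $\{1,\dots,r_k\}$ instead of the single permutation produced by the lemma.
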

\begin{proof}
Let $(S_k)_\omega$ be a subset of the Loeb space
$(X_{n_rr_k})_\omega$ such that $u((S_k)_\omega)=(S_k)_\omega$ for
any $u\in(\Theta\otimes 1_{r_k})'\cap\Pi_{k\to\omega} P_{n_kr_k}$.
Assume that $\mu_\omega((S_k)_\omega)\neq\{0,1\}$. We shall regard
$X_{n_kr_k}$ as $r_k$ copies of $X_{n_k}$. If $u_k$ is of the form
$1_{n_k}\otimes p$, with $p\in Sym(\{1,\ldots,r_k\})$ then $u_k$
will permute this $r_k$ copies of $X_{n_k}$.

With respect to this partition of $X_{n_kr_k}$ we have
$S_k=\sqcup_{j=1}^{r_k}A_k^j$ where $A_k^j\subset X_{n_k}$. Apply
the previous lemma to $A_k^1,\ldots,A_k^{r_k}$ to get a set $A_k$
and a permutation $p_k$. Define
$T_k=A_k\times\{1,\ldots,r_k\}\subset X_{n_kr_k}$. Then by the
conclusion of the previous lemma we have
\[|T_k\triangle S_k|\leq |(1_{n_k}\otimes p_k)(S_k)\triangle
S_k|.\] Define $u=\Pi_{k\to\omega}1_{n_k}\otimes p_k$. As
$u((S_k)_\omega)=(S_k)_\omega$ by the previous inequality we have
$(T_k)_\omega=(S_k)_\omega$.

It is easy to see that $\mu_\omega((T_k)_\omega)$ in
$(X_{n_rr_k})_\omega$ is equal to $\mu_\omega((A_k)_\omega)$ in
$(X_{n_r})_\omega$. Now the action $\alpha(\Theta)$ is ergodic. So
there exists $v\in\Theta'\cap\Pi_{k\to\omega}P_{n_k}$ such that
$\mu_\omega(v((A_k)_\omega)\triangle (A_k)_\omega)>0$. Define
$u=v\otimes 1_{r_k}$, $u\in(\Theta\otimes
1_{r_k})'\cap\Pi_{k\to\omega} P_{n_kr_k}$. Again, because we just
have an amplification $\mu_\omega(u((T_k)_\omega)\triangle
(T_k)_\omega)=\mu_\omega(v((A_k)_\omega)\triangle (A_k)_\omega)$.
This contradicts $u((S_k)_\omega)=(S_k)_\omega$ and we are done.
\end{proof}

\subsection{Extreme points in the convex structure}

We now turn our attention to extreme points in the convex
structure. Our first lemma is similar to Proposition 3.3.4 from
\cite{Br}.

\begin{lemma}
Let $[\Theta]\in Sof(G,P^\omega)$. Then $[\Theta]$ is an extreme
point iff for any projection $p\in\Theta(G)'\cap
\Pi_{k\to\omega}D_{n_k}$ $[\Theta]=[\Theta_p]$.
\end{lemma}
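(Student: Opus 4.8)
The plan is to prove both implications. For the forward direction, suppose $[\Theta]$ is an extreme point and let $p\in\Theta(G)'\cap\Pi_{k\to\omega}D_{n_k}$ be a projection with $0<Tr(p)<1$ (the cases $Tr(p)\in\{0,1\}$ being trivial). Set $\lambda=Tr(p)$ and write $p=\Pi_{k\to\omega}p_k$ with $p_k\in D_{n_k}$ a projection, so that $Tr(p_k)=m_k/n_k$ and $1-p=\Pi_{k\to\omega}(1_{n_k}-p_k)$ has trace $1-\lambda$. I would observe that the cut-down representations $\Theta_p$ and $\Theta_{1-p}$ satisfy, essentially by construction (reversing the direct-sum operation of Notation \ref{direct sum}), the identity $\Theta=\lambda\Theta_p+(1-\lambda)\Theta_{1-p}$ in the convex structure: concretely, $\theta_k=(p_k\theta_k p_k)\oplus((1_{n_k}-p_k)\theta_k(1_{n_k}-p_k))$ after conjugating by the permutation that sorts the support of $p_k$ to the front, and the dimension ratios $m_k/n_k\to\lambda$ are exactly what the definition of convex combination requires (with all $r_k^i=1$). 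Since $[\Theta]$ is extreme, this forces $[\Theta]=[\Theta_p]$ (and also $[\Theta]=[\Theta_{1-p}]$).

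For the reverse direction, suppose $[\Theta]=[\Theta_p]$ for every projection $p$ in the diagonal commutant, and suppose $[\Theta]=\sum_{i=1}^n\lambda_i[\Phi_i]$ is a convex combination with all $\lambda_i>0$. By Observation \ref{recoveringembeddings} there are pairwise orthogonal projections $p_i\in\Theta(G)'\cap\Pi_{k\to\omega}D_{\sum m_k^jr_k^j}$ with $\sum_i p_i=1$, $Tr(p_i)=\lambda_i$, and $[\Phi_i]=[\Theta_{p_i}]$. The hypothesis gives $[\Theta]=[\Theta_{p_i}]=[\Phi_i]$ for each $i$, which is exactly the statement that $[\Theta]$ is an extreme point.

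The main technical obstacle is the forward direction's identity $\Theta=\lambda\Theta_p+(1-\lambda)\Theta_{1-p}$: one must be careful that the definition of $\sum\lambda_i\Theta_i$ requires choosing amplification multiplicities $r_k^i$ making the dimension ratios converge to $\lambda_i$ \emph{along $\omega$}, whereas here the split of $\theta_k$ induced by $p_k$ gives ratios $m_k/n_k$ that converge to $\lambda$ only $\omega$-a.e.\ and need not equal $\lambda$ on the nose. I would handle this by noting that the convex combination is defined up to the equivalence relation on $Sof(G,P^\omega)$ and that the definition of the metric $d$ (taking an infimum over amplifications, the infimum being attained) makes the class $\sum\lambda_i[\Theta_i]$ insensitive to replacing the chosen $r_k^i$ by any other sequence with the same $\omega$-limit of ratios; in particular the sequence $r_k^i\equiv 1$ together with the partition $\theta_k=(p_k\theta_kp_k)\oplus((1-p_k)\theta_k(1-p_k))$ (valid $\omega$-a.e.) realizes $\lambda[\Theta_p]+(1-\lambda)[\Theta_{1-p}]$, which manifestly equals $[\Theta]$ after conjugation by the sorting permutation. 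Once this bookkeeping is in place, both directions follow immediately from the definition of extreme point.
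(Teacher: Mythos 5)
Your proof is correct and follows essentially the same route as the paper's: the forward direction via the decomposition $\Theta=\Theta_p\oplus\Theta_{1-p}$, which realizes $[\Theta]=Tr(p)[\Theta_p]+(1-Tr(p))[\Theta_{1-p}]$ in the convex structure, and the reverse direction via Observation \ref{recoveringembeddings}. The extra bookkeeping you supply about the dimension ratios only converging to $\lambda$ along $\omega$ is a detail the paper leaves implicit, and your handling of it is fine.
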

\begin{proof}
If $p\in\Theta(G)'\cap \Pi_{k\to\omega}D_{n_k}$ then
$\Theta=\Theta_p\oplus\Theta_{1-p}$. Then, by the definition of
the convex structure $[\Theta]=Tr(p)[\Theta_p]+
(1-Tr(p))[\Theta_{1-p}]$. If $[\Theta]$ is an extreme point then
$[\Theta]=[\Theta_p]$ for any $p$.

The reverse implication is a consequence of
(\ref{recoveringembeddings}): if $\Theta=\lambda_1\Phi_1+
\lambda_2\Phi_2$ then there exists
$p\in\Theta(G)'\cap\Pi_{k\to\omega}D_{n_k}$ with $Tr(p)=\lambda_1$
such that $[\Phi_1]=[\Theta_p]$.
\end{proof}

\begin{te}\label{extremepoints}
Let $\Theta:G\to\Pi_{k\to\omega}P_{n_k}$ be a sofic
representation. Assume that the action $\alpha(\Theta)$ on the
Loeb space $((X_{n_k})_\omega,\mu_\omega)$ is ergodic. Then
$[\Theta]$ is an extreme point.
\end{te}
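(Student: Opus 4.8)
The plan is to use the criterion from the previous lemma: $[\Theta]$ is extreme if and only if $[\Theta] = [\Theta_p]$ for every projection $p \in \Theta(G)' \cap \Pi_{k\to\omega} D_{n_k}$. So fix such a projection $p$ with $Tr(p) = t \in (0,1)$ (the cases $t = 0,1$ being trivial), and the goal is to produce, after passing to a common amplification, a permutation $w \in \Pi_{k\to\omega} P_{N_k}$ conjugating $\Theta \otimes 1$ to $\Theta_p \otimes 1$. The subset $p$ corresponds to a measurable set $E$ in the Loeb space which is \emph{not} invariant under $\alpha(\Theta)$ (unless it is trivial), so ergodicity does not say $p$ itself is central; rather ergodicity will be used to move copies of $p$ around so that finitely many translates tile the whole space.

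First I would set up the amplification. Since $Tr(p) = t$, choose an integer $r$ and think of $\Theta \otimes 1_r$ acting on $r$ copies of the underlying space; the point is to find, inside $(\Theta \otimes 1_r)(G)' \cap \Pi_{k\to\omega} D_{n_k r}$, a projection $q$ of trace $\lfloor tr \rfloor / r$ (or something converging to $t$) that is a sum of ``disjoint translates'' of $p$ under commutant unitaries, and whose translates can be completed to a partition of unity. Concretely: using ergodicity of $\alpha(\Theta)$, for the set $E$ of measure $t$, the orbit of $E$ under $\Theta(G)' \cap \Pi_{k\to\omega} P_{n_k}$ generates everything, so by a maximality/exhaustion argument one can find finitely many commutant unitaries $v_1, \ldots, v_m$ such that $v_1 E, \ldots, v_m E$ cover the space up to null sets, and then a standard disjointification inside the (nonseparable, but still a von Neumann algebra with the needed completeness) ultraproduct yields projections $e_1 \le v_1 p v_1^*, \ldots$ summing to $1$, with each $e_i$ subequivalent to $p$ via a commutant partial isometry landing in $\Pi_{k\to\omega} P_{n_k}$ (here one uses that partial isometries between subprojections of $\Pi_{k\to\omega}D_{n_k}$ that commute with $\Theta(G)$ can be taken in $\Pi_{k\to\omega}P_{n_k}$, analogous to the normalizer description in the ``Actions on the Loeb space'' subsection).

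Next, after amplifying both $\Theta$ and $\Theta_p$ to a common dimension $N_k$ (using that $\Theta_p \otimes 1$ amplifies to $N_k$ and that $\oplus_{i=1}^m (\Theta_p)_{e_i' }$-type direct sums reassemble to $\Theta \otimes 1_{\text{something}}$ up to a small-measure error controlled by $t r - \lfloor tr\rfloor \to 0$ along $\omega$), I would build the conjugating unitary: on the $i$-th summand use the commutant partial isometry $w_i$ identifying $\Theta_p$ with $\Theta_{e_i}$, and assemble $w = \sum_i e_i w_i$. By Lemma~\ref{permutations} (the lemma about $\sum_i e_i u_i$ being a permutation when it is unitary and the $e_i$ are diagonal projections summing to $1$), this $w$ lies in $\Pi_{k\to\omega} P_{N_k}$, and by construction $\mathrm{Ad}\, w$ carries $\oplus_i (\Theta \otimes 1)_{e_i} = \Theta \otimes 1$ onto $\oplus_i \Theta_p \otimes 1 = \Theta_p \otimes 1_m$, which is an amplification of $\Theta_p$. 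Hence $[\Theta] = [\Theta_p]$, and since $p$ was arbitrary the previous lemma gives that $[\Theta]$ is extreme.

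The main obstacle I expect is the ``finite cover by orbit translates'' step: ergodicity a priori only says that a $\Theta(G)'$-invariant set is null or co-null, not that finitely many translates of a positive-measure set cover the space — in a general (nonseparable, non-countably-acting) setting this can fail. So the real work is to extract from the ergodic action a \emph{quantitative} statement: that for any $\delta > 0$ one can cover $1 - \delta$ of the mass by finitely many commutant translates of $p$, and then push $\delta \to 0$ using the freedom to re-amplify (the leftover $\delta$-piece gets absorbed because amplification lets us approximate $t$ by rationals with arbitrarily fine denominators, and the metric $d$ on $Sof(G, P^\omega)$ only sees an $\ell^2$-weighted average that is continuous in these errors). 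Managing this approximation — keeping the partial isometries inside $\Pi_{k\to\omega} P_{n_k}$, keeping track of traces, and controlling the $\|\cdot\|_2$-errors so that the infimum defining $d([\Theta],[\Theta_p])$ comes out $0$ — is where the technical care goes; everything else is bookkeeping with direct sums and amplifications.
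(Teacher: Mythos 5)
Your reduction to the preceding lemma is legitimate, and you have correctly identified both the right tools (ergodicity to move commutant projections around, Lemma \ref{permutations} to glue cut-down permutations into a single permutation) and the exact place where the difficulty sits. But the difficulty you flag is not a bookkeeping issue that can be deferred --- it is the entire content of the theorem, and the route you sketch for it does not close. Ergodicity gives you, for any $\delta>0$, finitely many commutant translates $v_1E,\ldots,v_mE$ covering all but $\delta$ of the space (via the usual essential-supremum argument, which survives the uncountability of the acting group). After disjointifying you get $e_i\le v_ipv_i^*$ with $\sum_ie_i\ge 1-\delta$ and partial isometries $e_iv_i$ carrying $\Theta_{f_i}$ to $\Theta_{e_i}$ with $f_i=v_i^*e_iv_i\le p$. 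The problem is that the $f_i$ are arbitrary subprojections of $p$ with uncontrolled overlaps and uncontrolled traces, so $\bigoplus_i\Theta_{e_i}$ is identified with $\bigoplus_i\Theta_{f_i}$, which is \emph{not} an amplification of $\Theta_p$ and cannot be reassembled into one without first proving that commutant diagonal projections of equal trace are equivalent via commutant permutation partial isometries. That comparison statement is itself the theorem in disguise, and proving it requires an exhaustion argument that your proposal never supplies; the $\delta\to 0$ limit and the rational approximation of $t=Tr(p)$ do not substitute for it, because the equivalence relation on $Sof(G,P^\omega)$ demands an exact conjugacy after amplification (equivalently $d=0$), and nothing in the sketch controls the reassembly error.

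The paper's proof avoids the tiling problem entirely by arguing by contradiction with a maximality argument. If $[\Theta]$ is not extreme, write $\Theta\otimes 1=\Theta_1\oplus\Theta_2$ with $[\Theta_1]\ne[\Theta_2]$ (ergodicity passes to the amplification by Proposition \ref{amplificationpreservesergodicity}), and take a \emph{maximal} family of disjoint projections $p_i$ under the first summand and $q_i$ under the second with $(\Theta_2)_{q_i}=\mathrm{Ad}\,u_i\circ(\Theta_1)_{p_i}$. Ergodicity is used only once, and only qualitatively: if the leftovers $p=1-\sum p_i$ and $q=1-\sum q_i$ are nonzero, there is a commutant permutation $u$ with $\tilde qu\tilde p\ne 0$, and the cut-down $v=\tilde qu\tilde p$ produces one more matched pair, contradicting maximality. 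Hence the family exhausts, and $u=\sum_iu_ip_i$ is a permutation by Lemma \ref{permutations} conjugating $\Theta_1$ to $\Theta_2$. This is the exhaustion step your approach is missing; without it, or something equivalent to it, your argument does not prove the theorem.
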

\begin{proof}
If $[\Theta]$ is not an extreme point then there exists
$\Theta_1,\Theta_2:G\to\Pi_{k\to\omega}P_{m_k}$ such that
$[\Theta_1]\neq[\Theta_2]$ and $\Theta\otimes
1=\Theta_1\oplus\Theta_2$. By hypothesis and Proposition
\ref{amplificationpreservesergodicity} $\alpha(\Theta\otimes 1)$
is ergodic.

Let $\{p_i\}_i\subset(\Theta_1(G)'\cap\Pi_{k\to\omega}D_{m_k})$
and $\{q_i\}_i\subset(\Theta_2(G)'\cap\Pi_{k\to\omega}D_{m_k})$ be
a maximal family of disjoint projections such that
$(\Theta_2)_{q_i}=Ad u_i\circ(\Theta_1)_{p_i}$ (no tensor, this
implies $Tr(p_i)=Tr(q_i)$ as matrix dimensions have to be the
same) where $u_i$ is an ultraproduct of permutations. Assume that
$\sum_ip_i<1$ in $\Pi_{k\to\omega}D_{m_k}$.

Let $p=1-\sum_ip_i$ and $q=1-\sum_iq_i$. Let $\tilde p,\tilde
q\in(\Theta(G)\otimes 1)'\cap\Pi_{k\to\omega}D_{2m_k}$ defined by
$\tilde p=p\oplus 0$, $\tilde q=0\oplus q$. Let $A_p,A_q$ be the
subsets of $(X_{2m_k})_\omega$ corresponding to the projections
$\tilde p$ and $\tilde q$. Because the action of
$(\Theta(G)\otimes 1)'\cap\Pi_{k\to\omega}P_{2m_k}$ is ergodic
there exists $u\in(\Theta(G)\otimes
1)'\cap\Pi_{k\to\omega}P_{2m_k}$ such that $\mu_\omega(u(A_p)\cap
A_q)>0$. This is equivalent to $\tilde qu\tilde p\neq 0$.

Let $v=\tilde qu\tilde p$, $p_0=v^*v=\tilde p(u^*\tilde qu)$ and
$q_0=vv^*=\tilde q(u\tilde pu^*)$. Then:
\[v(p_0(\Theta\otimes 1)p_0)v^*=vp_0v^*(\Theta\otimes
1)=q_0(\Theta\otimes 1)=q_0(\Theta\otimes 1)q_0.\] This implies
$v((\Theta_1)_{p_0}\oplus 0)v^*=0\oplus(\Theta_2)_{q_0}$, so
families $\{p_i\}$ and $\{q_i\}$ are not maximal. Then we must
have $\sum_ip_i=1=\sum_iq_i$. Recall that $(\Theta_2)_{q_i}=Ad
u_i\circ(\Theta_1)_{p_i}$.

Define $u=\sum_i u_ip_i$. Then $u\in\Pi_{k\to\omega}P_{m_k}$ by
\ref{permutations} and $\Theta_2=Ad u\circ\Theta_1$, contradicting
$[\Theta_1]\neq[\Theta_2]$.
\end{proof}

Using this theorem (and Theorem 2 from \cite{El-Sza2}, see also
observation \ref{amenablegroups}) we can construct sofic
representations such that the commutant acts non-ergodicaly for
any sofic non-amenable group.

\begin{q}
Is a converse of Theorem \ref{extremepoints} also true?
\end{q}

\subsection{Examples of extreme points}

Theorem \ref{extremepoints} allows us to provide some examples of
extreme points. David Kerr and Hanfeng Li proved that
$\alpha(\Theta)$ is ergodic for any $\Theta$ when $G$ is amenable
(\cite{Ke-Li}, Theorem 5.8). It follows that any element of
$Sof(G,P^\omega)$ is an extreme point. This is possible only if
$Sof(G,P^\omega)$ consists of one point, which is consistent with
results from \cite{El-Sza2}. The proof of Theorem 5.8 from
\cite{Ke-Li} contains something more.

\begin{p}(Proof of Theorem 5.8,\cite{Ke-Li})\label{sameamenable} There exists
$f:(0,1)\to(0,1)$ a continuous function such that, for any
amenable group $H$, for any sofic representation
$\Theta:H\to\Pi_{k\to\omega}P_{n_k}$ and any $Y=(Y_k)_\omega$,
$Z=(Z_k)_\omega$ subsets of $(X_{n_k})_\omega$ of strictly
positive measure there exists $u\in\Theta'(H)\cap
\Pi_{k\to\omega}P_{n_k}$ such that:
\[\mu_\omega(u(Y)\cap Z)\geq
f(min(\mu_\omega(Y),\mu_\omega(Z))).\]
\end{p}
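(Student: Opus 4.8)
The plan is to prove this by a quantitative Følner/amenability argument carried out uniformly at the finite level, then transported to the ultraproduct. First I would recall that, since $H$ is amenable, Kerr--Li's proof of their Theorem~5.8 produces, for any $\varepsilon>0$ and any finite subset $F\subset H$, a $(F,\varepsilon)$-invariant finite set that controls how much the commutant can move a given measurable set around; the crucial point I want to extract is that the function measuring the guaranteed overlap depends \emph{only} on the measures of $Y$ and $Z$, not on $H$, $\Theta$, or the sequence $(n_k)_k$. Concretely, I would work at the level of the finite permutation representations $\theta_k\colon H\to P_{n_k}$: given subsets $Y_k,Z_k\subset X_{n_k}$ with $\mu_{n_k}(Y_k),\mu_{n_k}(Z_k)\geq\delta$, I want a permutation $u_k$ commuting with $\theta_k(H)$ up to small $\|\cdot\|_2$-error such that $\mu_{n_k}(u_k(Y_k)\cap Z_k)\geq f(\delta)-o_k(1)$, where the $o_k(1)$ term vanishes along $\omega$.

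The key steps, in order, would be: (1) Fix $\delta=\min(\mu_\omega(Y),\mu_\omega(Z))$ and choose representatives $Y_k,Z_k\subset X_{n_k}$ with $\lim_{k\to\omega}\mu_{n_k}(Y_k)=\mu_\omega(Y)$ and likewise for $Z$; without loss of generality $\mu_{n_k}(Y_k),\mu_{n_k}(Z_k)\geq\delta/2$ for $\omega$-almost every $k$. (2) For each such $k$, decompose $X_{n_k}$ into $\theta_k(H)$-orbits; on each orbit the action is transitive, hence isomorphic to $H/H_0$ for a finite-index subgroup, and the relative commutant permutation $u$ is free to act by right translation (and by permuting isomorphic orbits). (3) Use a Følner set for $H$ — which exists uniformly in the sense that an $(F,\varepsilon)$-Følner set can be chosen independently of the particular quotient $H/H_0$, since finite quotients are themselves amenable with the same rate — to build $u_k$ moving $Y_k$ so that its average translate meets $Z_k$ in measure at least $g(\delta/2)$ for an explicit continuous $g$; a pigeonhole / averaging argument over the translates picks out a single good $u_k$. (4) Set $f=g$ (continuous on $(0,1)$, extended if necessary), let $u=\Pi_{k\to\omega}u_k$, and check $u\in\Theta'(H)\cap\Pi_{k\to\omega}P_{n_k}$ because each $u_k$ commutes with $\theta_k(H)$ exactly (it is built from orbit-wise right translations and orbit permutations), and $\mu_\omega(u(Y)\cap Z)=\lim_{k\to\omega}\mu_{n_k}(u_k(Y_k)\cap Z_k)\geq f(\delta)$.

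The main obstacle I expect is step~(3): making the Følner argument genuinely \emph{uniform} in the group $H$ and in the sequence $(n_k)$. The naive Følner set depends on $H$, but what one actually needs is only that for the finite transitive actions appearing as orbits of $\theta_k$, an averaging over a Følner set moves any $\delta$-large set to overlap any other $\delta$-large set — and the point is that a finite group acting transitively on $m$ points has its own Følner sets with a rate bounded purely in terms of $m$ (indeed one can take the whole group), so the overlap estimate becomes a statement about finite sets that Kerr--Li already extract in their proof. I would therefore lean on the internal structure of their argument rather than the black-box statement of Theorem~5.8, isolating the continuous function $f$ they implicitly construct; making this extraction precise, and verifying that the error terms really are $o_k(1)$ along $\omega$ uniformly, is the technical heart of the proof. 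The rest — choosing representatives, passing to the ultraproduct, checking commutation — is routine given Lemma~\ref{permutations} and the dictionary between projections in $\Pi_{k\to\omega}D_{n_k}$ and measurable subsets of the Loeb space.
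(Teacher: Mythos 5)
The paper itself offers no argument here: the proposition is extracted verbatim from the proof of Theorem 5.8 in \cite{Ke-Li}, and your closing remark that one should lean on the internal structure of that proof rather than its statement is exactly what the paper does. The problem is that the concrete plan you propose for reconstructing the argument would not work. Step (2) assumes the finite-level maps $\theta_k\colon H\to P_{n_k}$ are genuine homomorphisms, so that $X_{n_k}$ decomposes into transitive $H$-orbits of the form $H/H_0$. But a sofic representation is only a homomorphism into the \emph{ultraproduct}; the individual $\theta_k$ are merely asymptotic homomorphisms, and for some amenable groups (e.g.\ the infinite finitely generated simple amenable groups of Juschenko--Monod) there is no nontrivial homomorphism to any finite symmetric group at all, so the orbit decomposition you rely on cannot exist. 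Even granting a genuine transitive action on $H/H_0$, the commutant consists of right translations by $N_H(H_0)/H_0$, which can be trivial; and the averaging identity $\frac{1}{|G|}\sum_{g}\mu(gY\cap Z)=\mu(Y)\mu(Z)$ that underlies your step (3) averages over \emph{left} translates, so the good element it produces lies in $\Theta(H)$ rather than in its commutant, which is the wrong object.

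The ingredient your plan is missing is the Ornstein--Weiss quasitiling theorem, which is what Kerr and Li actually use: for $\omega$-almost every $k$ the set $\{1,\dots,n_k\}$ can be $\varepsilon$-quasitiled by approximate translates of finitely many Fo\-lner sets, and elements of the commutant are manufactured by permuting tiles of the same shape among themselves --- such permutations commute with the sofic approximation up to small $\|\cdot\|_2$-error precisely because the tiles are Fo\-lner. The quantitative overlap $f(\min(\mu_\omega(Y),\mu_\omega(Z)))$ then comes from counting, among the tiles, those carrying a definite proportion of $Y_k$ and those carrying a definite proportion of $Z_k$, and matching them up; the uniformity in $H$, $\Theta$ and $(n_k)_k$ holds because the quasitiling constants depend only on $\varepsilon$ and not on the group. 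Your steps (1) and (4) --- choosing representatives and passing to the ultraproduct --- are routine and correct, but without the quasitiling input the heart of the argument is absent, and the orbit-based substitute you propose in its place fails.
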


Using this proposition we can prove the existence of extreme
points for initially sub-amenable groups.

\begin{te}
Let $G$ be an initially sub-amenable group. Then there exists
$\Theta$ a sofic representation of $G$ such that $\alpha(\Theta)$
is ergodic.
\end{te}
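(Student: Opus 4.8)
The plan is to exploit the definition of \emph{initially sub-amenable}: every finite subset of $G$ embeds, as a partial multiplication table, into some amenable group. Concretely, fix the enumeration $G=\{g_0=e,g_1,g_2,\ldots\}$ and for each $m$ let $F_m=\{g_0,\ldots,g_m\}$; there is an amenable group $H_m$ and an injection $\phi_m\colon F_m\cdot F_m\to H_m$ which is multiplicative whenever the product lands in $F_m\cdot F_m$. Each $H_m$ is amenable hence sofic, so by Proposition~\ref{sameamenable} it has a sofic representation $\Psi_m\colon H_m\to\Pi_{k\to\omega}P_{n_k^m}$ for which the commutant acts on the Loeb space with the quantitative intersection property governed by the single continuous function $f$. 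First I would transport $\Psi_m$ along $\phi_m$ to build an \emph{almost-representation} $\theta^m\colon G\to \Pi_{k\to\omega}P_{n_k^m}$ agreeing with a genuine homomorphism on the window $F_m$: on generators in $F_m$ set $\theta^m(g_i)=\Psi_m(\phi_m(g_i))$, and extend arbitrarily (say by the identity) outside the window. By construction $\theta^m(g_i)\theta^m(g_j)=\theta^m(g_ig_j)$ and $Tr(\theta^m(g_i))=0$ for all $g_i\neq e$ in $F_m$, because $\Psi_m$ is trace-faithful on $H_m$.

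Next I would do a diagonal/ultralimit construction over $m$ to assemble a single sofic representation $\Theta$ of $G$. The clean way is to note that $(\theta^m)_m$ is a sequence of maps $G\to\Pi_{k\to\omega}P_{n_k^m}$ that are ``$F_m$-good'' (multiplicative and trace-$0$ on the window of size $m$); taking an iterated ultraproduct, or equivalently choosing for each $k$ a suitable $m(k)\to\infty$ and forming $\Theta=\Pi_{k\to\omega}\theta^{m(k)}_k$ on dimension $n_k^{m(k)}$, yields a genuine group morphism $\Theta\colon G\to\Pi_{k\to\omega}P_{N_k}$ with $Tr(\Theta(g))=0$ for every $g\neq e$; that is, a sofic representation in the sense of the paper. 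This is the same diagonalization mechanism already used in the excerpt (proof of the Elek--Szabó theorem, and the remark that the dimension sequence is immaterial), so it is routine. The point of keeping the \emph{same} $f$ for every $H_m$ is what makes the diagonal limit retain a uniform ergodicity estimate rather than degenerating.

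The crux is then to show $\alpha(\Theta)$ is ergodic on the Loeb space $((X_{N_k})_\omega,\mu_\omega)$. Let $(S_k)_\omega$ be an invariant subset of positive measure $\delta=\mu_\omega((S_k)_\omega)\in(0,1]$; I want to show $\delta=1$. Take $Y=(S_k)_\omega$ and $Z$ its complement and suppose $\mu_\omega(Z)>0$. For each $k$, $S_k\subset X_{N_k}=X_{n_k^{m(k)}}$ is just a subset of the Loeb-level finite space attached to $\Psi_{m(k)}$, of measure close to $\delta$, and its complement of measure close to $1-\delta$; applying Proposition~\ref{sameamenable} to $H_{m(k)}$, $\Psi_{m(k)}$ and these two sets produces $u_k\in\Psi_{m(k)}(H_{m(k)})'\cap\Pi P_{n_k}$ with $\mu_\omega(u_k(Y)\cap Z)\ge f(\min(\mu_\omega(Y),\mu_\omega(Z)))>0$. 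The delicate step is that such a $u_k$ commutes with $\Psi_{m(k)}(H_{m(k)})$, hence with $\theta^{m(k)}(g_i)=\Psi_{m(k)}(\phi_{m(k)}(g_i))$ for $g_i\in F_{m(k)}$, so in the diagonal limit the resulting $u=\Pi_{k\to\omega}u_k$ lands in $\Theta(G)'\cap\Pi_{k\to\omega}P_{N_k}$ — here I must check that the ``arbitrary extension outside the window'' does not spoil commutation in the ultralimit, which works precisely because any fixed $g_i$ lies inside $F_{m(k)}$ for all large $k$, so the bad coordinates are pushed to an $\omega$-null set. Then $\mu_\omega(u(Y)\cap Z)\ge f(\min(\delta,1-\delta))>0$ contradicts invariance of $(S_k)_\omega$ under the commutant. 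Hence $\mu_\omega(Z)=0$, so $\delta=1$ and $\alpha(\Theta)$ is ergodic; by Theorem~\ref{extremepoints}, $[\Theta]$ is an extreme point of $Sof(G,P^\omega)$. I expect the main obstacle to be this last commutation-in-the-ultralimit bookkeeping — making the diagonal choice of $m(k)$ simultaneously fast enough that $F_{m(k)}$ exhausts $G$ and compatible with extracting, for each invariant set one tests, a commuting $u$ uniformly via $f$ — and this is exactly where the uniformity of $f$ over all amenable $H$ in Proposition~\ref{sameamenable} is indispensable.
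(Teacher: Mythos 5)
Your overall strategy is the paper's: transport sofic representations of the amenable local models $H_m$ along the partial morphisms $\phi_m$, assemble them by an ultraproduct over the window index, and use the single function $f$ of Proposition \ref{sameamenable} to push the mixing estimate through the outer limit. Where you write ``taking an iterated ultraproduct'' you are reproducing the paper's proof exactly: it forms $\Pi_{k\to\omega}(\Pi_{r\to\omega}P_{n_{k,r}})=\Pi_{(k,r)\to\omega\otimes\omega}P_{n_{k,r}}$, defines $\Theta(g)=\Pi_{k\to\omega}\Theta_k(\phi_k(g))$, and runs the ergodicity argument you sketch, with $Y_k=((Y_{k,r})_r)_\omega$ a genuine Loeb subset of the \emph{inner} ultraproduct so that Proposition \ref{sameamenable} applies verbatim and yields $u_k$ in the inner commutant.

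The genuine gap is the claim that this is ``equivalent'' to choosing $m(k)$ and forming a diagonal $\Theta=\Pi_{k\to\omega}\theta^{m(k)}_k$ over the single ultrafilter $\omega$. It is not, and the paper explicitly flags this as the drawback of its proof: the construction produces an ergodic representation (hence an extreme point) only in $Sof(G,P^{\omega\otimes\omega})$, not in $Sof(G,P^{\omega})$. The obstruction sits precisely in your ``crux'' paragraph: there you apply Proposition \ref{sameamenable} to the level-$k$ set $S_k$ and its complement, but in the diagonal picture $S_k$ is a subset of one finite space $X_{N_k}$, whereas the proposition takes as input Loeb subsets of an ultraproduct and outputs an element of the ultraproduct commutant; it asserts nothing at any fixed finite level. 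To diagonalize you would need a finitary, quantitative version (for every $\epsilon,\delta>0$ and finite $F\subset H$, every sufficiently good finite sofic approximation admits, for any two subsets of normalized measure at least $\delta$, a permutation $\epsilon$-commuting with the approximation and realizing overlap at least $f(\delta)-\epsilon$), with the additional bookkeeping that $m(k)$ and the inner index must be fixed before one knows which invariant set is being tested. Such a uniform finitary statement may be extractable from the Kerr--Li proof, but it is not Proposition \ref{sameamenable}, and without it the single-ultrafilter version of your argument does not close. If you keep the iterated ultraproduct, your proof is correct and coincides with the paper's.
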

\begin{proof}
For this proof we need product ultrafilter techniques (see
\cite{Ca-Pa}). The drawback of the proof is that we start with an
ultrafilter $\omega$ and get in the end an extreme point for
$\omega\otimes\omega$, in other words an extreme point in the set
$Sof(G,P^{\omega\otimes\omega})$.

Let $G=\cup_kF_k$, where $\{F_k\}_k$ is an increasing sequence of
finite subsets of $G$. By hypothesis for each $k$ there exists
$H_k$ an amenable group and $\phi_k:F_k\to H_k$ such that
$\phi(g)\phi(h)=\phi(gh)$ when $g,h,gh\in F_k$. Choose
$\Theta_k:H_k\to\Pi_{r\to\omega}P_{n_k,r}$ a sofic representation
of $H_k$, $n_{k,r}\in\nz$.

The ultraproduct $\Pi_{r\to\omega}P_{n_k,r}$ is a metric group for
each $k$. We can construct the ultraproduct of these metric groups
to get the equality:
\[\Pi_{k\to\omega}(\Pi_{r\to\omega}P_{n_k,r})=
\Pi_{(k,r)\to\omega\otimes\omega}P_{n_k,r}.\] Define
$\Theta:G\to\Pi_{(k,r)\to\omega\otimes\omega}P_{n_k,r}$ by:
\[\Theta(g)=\Pi_{k\to\omega}\Theta_k(\phi_k(g)).\]
We now prove that $\alpha(\Theta)$ is ergodic. Let
$Y=(Y_{k,r})_{\omega\otimes\omega}$,
$Z=(Z_{k,r})_{\omega\otimes\omega}$ be subsets of
$(X_{n_{k,r}})_{\omega\otimes\omega}$ of strictly positive
measure. Define $Y_k=((Y_{k,r})_r)_\omega$ and
$Z_k=((Z_{k,r})_r)_\omega$. Then $\mu_{\omega\otimes\omega}(Y)=
\lim_{k\to\omega}\mu_\omega(Y_k)$ and the same for $Z$. Because
$H_k$ is amenable, by Proposition \ref{sameamenable}, there exists
$u_k\in\Theta_k'(H_k)\cap \Pi_{r\to\omega}P_{n_{k,r}}$ such that:
\[\mu_\omega(u_k(Y_k)\cap Z_k)\geq
f(min(\mu_\omega(Y_k),\mu_\omega(Z_k))).\] Let
$u=\Pi_{k\to\omega}u_k$. Then $u$ commutes with $\Theta$ and by
continuity:
\[\mu_{\omega\otimes\omega}(u(Y)\cap Z)\geq
f(min(\mu_{\omega\otimes\omega}(Y),\mu_{\omega\otimes\omega}(Z)))>0.\]
\end{proof}

In \cite{Cor}, Yves Cornulier constructed a sofic group that is
not initially sub-amenable, so the last theorem doesn't solve the
problem of existence of extreme points for sofic groups in
general.

Residually finite groups are initially sub-amenable, but in this
case there is an easier way of constructing extreme points.

\begin{te}(Theorem 5.7,\cite{Ke-Li})
Let $G$ be a residually finite group and let $\{G_i\}_{i\in\nz}$
be a sequence of finite index normal subgroups such that
$\cap_{n\in\nz}\cup_{i\geq n}G_i=\{e\}$. Let $\Theta$ be the sofic
representation of $G$ constructed by taking the left action of $G$
on $G/G_i$. Then $\alpha(\Theta)$ is ergodic.
\end{te}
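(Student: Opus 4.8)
The statement to prove is Theorem 5.7 of Kerr--Li: for a residually finite group $G$ with a sequence $\{G_i\}$ of finite-index normal subgroups satisfying $\cap_n\cup_{i\geq n}G_i=\{e\}$, the sofic representation $\Theta$ coming from the left translation actions $G\curvearrowright G/G_i$ has ergodic commutant action $\alpha(\Theta)$ on the Loeb space. My plan is to reduce this to Proposition \ref{sameamenable}-style reasoning, or rather to argue directly with finite-dimensional approximations, since in the residually finite case the soficity data is genuinely \emph{finite} (honest homomorphisms $G\to\mathrm{Sym}(G/G_i)$) rather than approximate, which should make the ergodicity argument considerably cleaner than in the initially-sub-amenable case.

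First I would set up the picture concretely. Write $n_i=[G:G_i]$, identify $X_{n_i}$ with $G/G_i$ equipped with normalized counting measure, and let $\theta_i:G\to P_{n_i}$ be the permutation representation of the left action. Then $\Theta=\Pi_{i\to\omega}\theta_i$, and one checks $Tr(\theta_i(g))=|\{hG_i: ghG_i=hG_i\}|/n_i = |\{hG_i: h^{-1}gh\in G_i\}|/n_i$, which tends to $0$ along $\omega$ by the hypothesis $\cap_n\cup_{i\geq n}G_i=\{e\}$ (for fixed $g\neq e$, eventually $g\notin\cup_{i\geq n}G_i$ forces the count to be small; one must be a little careful to handle conjugates, but normality of $G_i$ means $h^{-1}gh\in G_i \iff g\in G_i$, so actually $Tr(\theta_i(g))\in\{0,1\}$ and equals $1$ only if $g\in G_i$). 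So $\Theta$ is a genuine sofic representation. Next, identify $\Pi_{i\to\omega}D_{n_i}\simeq L^\infty((X_{n_i})_\omega)$ and note that the commutant $\Theta(G)'\cap\Pi_{i\to\omega}P_{n_i}$ contains, in particular, the ultraproducts of the \emph{right} translation actions $\rho_i:G\to P_{n_i}$, $\rho_i(g): hG_i\mapsto hg^{-1}G_i$ — these are well-defined because $G_i$ is normal, and they commute with the left action. This is the source of symmetry I would exploit.

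The heart of the proof is then: given a subset $S=(S_i)_\omega\subseteq(X_{n_i})_\omega$ with $0<\mu_\omega(S)<1$ that is invariant under every $u\in\Theta(G)'\cap\Pi_{i\to\omega}P_{n_i}$, derive a contradiction. The key finite statement is that for each $i$ the subgroup $\rho_i(G)\leq\mathrm{Sym}(G/G_i)$ acts \emph{transitively} on $G/G_i$ (right multiplication is transitive), so if $S_i\subseteq G/G_i$ is \emph{exactly} invariant under $\rho_i(G)$ it must be $\emptyset$ or all of $G/G_i$. But $S_i$ is only \emph{approximately} invariant (invariance holds in the ultraproduct, i.e. $\lim_{i\to\omega}|\rho_i(g)(S_i)\triangle S_i|/n_i=0$ for each fixed $g$). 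The obstacle — and I expect this to be the main one — is upgrading approximate transitivity-invariance to an actual contradiction with $0<\mu_\omega(S)<1$. The right tool is a mixing/spectral estimate: the permutation representation of $G$ on $\ell^2(G/G_i)$ decomposes as $\mathbb{C}\oplus \ell^2_0(G/G_i)$, and $\rho_i(G)$ acts on $\ell^2_0$; I would want a uniform-in-$i$ averaging inequality of the form $\|P_{\ell^2_0}\mathbf{1}_{S_i}\|_2^2 \leq \sum_{g\in F}\|\rho_i(g)\mathbf{1}_{S_i}-\mathbf{1}_{S_i}\|_2^2$ for a suitable finite generating-ish set $F$ — but since $G$ need not be finitely generated this cannot hold uniformly. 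The fix is to go back to Proposition \ref{sameamenable}: observe that each $G/G_i$ with the left $G$-action factors through the \emph{finite} (hence amenable) quotient $G/G_i$, and $G/G_i$ acting on itself by left translation is exactly the setting where one has the Kerr--Li function $f$; apply \ref{sameamenable} with $H=G/G_i$ and the sofic representation $\theta_i$ itself viewed as a representation of $G/G_i$ on the one-point-per-coset space, to obtain, for $Y=(Y_i)_\omega, Z=(Z_i)_\omega$ of positive measure, elements $v_i\in\theta_i(G/G_i)'\cap P_{n_i}$ (finite-dimensional!) with $|v_i(Y_i)\cap Z_i|/n_i\geq f(\min(|Y_i|/n_i,|Z_i|/n_i))$; then $v=\Pi_{i\to\omega}v_i$ lies in $\Theta(G)'\cap\Pi_{i\to\omega}P_{n_i}$ and satisfies $\mu_\omega(v(Y)\cap Z)>0$. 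Taking $Y=S$, $Z=S^c$ contradicts invariance of $S$. This is essentially the same maneuver as in the initially-sub-amenable theorem above, only simpler because here the $\phi_k$ are honest homomorphisms onto genuinely finite groups, so no product ultrafilter is needed and the extreme point lives in $Sof(G,P^\omega)$ itself.

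So in summary: (i) verify $\Theta$ is a sofic representation using normality and the intersection hypothesis; (ii) for an invariant $S$ with $0<\mu_\omega(S)<1$, set $Y=S$, $Z=(X_{n_i})_\omega\setminus S$; (iii) for each $i$ apply Proposition \ref{sameamenable} to the amenable (finite) group $G/G_i$ and its translation action to produce $v_i\in\theta_i(G)'\cap P_{n_i}$ with a uniform lower bound on $|v_i(Y_i)\cap Z_i|/n_i$; (iv) take $v=\Pi_{i\to\omega}v_i\in\Theta(G)'\cap\Pi_{i\to\omega}P_{n_i}$ and conclude $\mu_\omega(v(S)\cap S^c)>0$, contradicting invariance. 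The one technical point requiring care is checking that $v_i\in\theta_i(G)'$ — i.e. that the commutant of $\theta_i(G)$ inside $P_{n_i}$ equals the commutant of $\theta_i(G/G_i)$ — which is immediate since $\theta_i$ factors through $G/G_i$, so its image is the same subgroup of $P_{n_i}$.
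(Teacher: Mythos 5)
The paper does not actually prove this theorem --- it is stated as a citation of Theorem 5.7 of Kerr--Li --- so there is no in-paper proof to compare against; I can only assess your argument on its own terms, and it is essentially correct. Your key structural observations are all right: by normality $Tr(\theta_i(g))\in\{0,1\}$ and the hypothesis $\cap_n\cup_{i\geq n}G_i=\{e\}$ forces it to vanish eventually for $g\neq e$, so $\Theta$ really is a sofic representation; and the right translations $\rho_i(g):hG_i\mapsto hg^{-1}G_i$ are well defined by normality, commute with $\theta_i(G)$, and act transitively, which is the correct source of ergodicity. The one step I would push back on is your invocation of Proposition \ref{sameamenable} ``for each $i$'' applied to the finite group $G/G_i$: that proposition, as stated, concerns a sofic representation into an ultraproduct $\Pi_{k\to\omega}P_{n_k}$ with $n_k\to\infty$ and produces an element of the ultraproduct commutant; it does not literally yield, for a single finite permutation action, a permutation $v_i\in P_{n_i}$ with a dimension-independent lower bound. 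Fortunately the finite lemma you need is elementary and you do not need \ref{sameamenable} at all: identifying $G/G_i$ with the finite group $H_i=G/G_i$ acting on itself by left translation, one computes $\sum_{h\in H_i}|Y_ih\cap Z_i|=|Y_i|\,|Z_i|$, so some right translation $v_i$ satisfies $|v_i(Y_i)\cap Z_i|/n_i\geq(|Y_i|/n_i)(|Z_i|/n_i)$; this is your Proposition \ref{sameamenable}-type estimate with $f(t)=t^2$. Taking $v=\Pi_{i\to\omega}v_i$ then gives $\mu_\omega(v(Y)\cap Z)\geq\mu_\omega(Y)\mu_\omega(Z)>0$, and with $Y=S$, $Z=S^c$ (after the routine approximation of a general invariant measurable set by one of the form $(S_i)_\omega$, which you, like the paper, leave implicit) this contradicts invariance. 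With that substitution your proof is complete, and your remark that no product ultrafilter is needed here, unlike in the initially sub-amenable case, is exactly the right point of comparison.
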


\section*{Acknowledgement}

I am grateful to Hanfeng Li for his nice talk at the LaWiNe
seminar, where he spoked about the ergodicity of the commutant of
the sofic group acting on the Loeb measure space that inspired
this work, and for remarks on a previous version of this paper. I
thank Gabor Elek for useful discussions on the topics of the
article.

LIVIU P\u AUNESCU, \emph{UNIVERSITY of VIENNA and INSTITUTE of
MATHEMATICS "S. Stoilow" of the ROMANIAN ACADEMY}(on leave) email:
liviu.paunescu@imar.ro

\end{document}